\numberwithin{equation}{section}
\newtheorem{theorem}{Theorem}[section]
\newtheorem{proposition}[theorem]{Proposition}
\newtheorem{lemma}[theorem]{Lemma}
\theoremstyle{definition}
\theoremstyle{remark}
\newtheorem{remark}[theorem]{Remark}
\newtheorem{example}[theorem]{Example}
\newcommand{\R}{\mathbf{R}}
\begin{document}

\title{Migrating elastic flows}

\author[T.~Kemmochi]{Tomoya Kemmochi}
\address[T.~Kemmochi]{Department of Applied Physics,
Graduate School of Engineering,
Nagoya University, Furo-cho, Chikusa-ku, Nagoya, Aichi, 464-8603, Japan}
\email{kemmochi@na.nuap.nagoya-u.ac.jp}

\author[T.~Miura]{Tatsuya Miura}
\address[T.~Miura]{Department of Mathematics, Tokyo Institute of Technology, Meguro, Tokyo 152-8511, Japan}
\email{miura@math.titech.ac.jp}

\date{\today}
\keywords{Elastic flow, Huisken's problem, long-time behavior, natural boundary condition, elastica}
\subjclass[2020]{53E40 (primary), 53A04, 65M22 (secondary)}

\begin{abstract}
  Huisken's problem asks whether there is an elastic flow of closed planar curves that is initially contained in the upper half-plane but `migrates' to the lower half-plane at a positive time.
  Here we consider variants of Huisken's problem for open curves under the natural boundary condition, and construct various migrating elastic flows both analytically and numerically.
\end{abstract}

\maketitle

%\setcounter{tocdepth}{1}
%\tableofcontents

\section{Introduction}

The lack of maximum principles often brings about peculiar phenomena in higher-order parabolic equations.
In the context of geometric flows for curves, the curve shortening flow is the most typical second-order flow, and it is well known that many properties are preserved along the flow, such as convexity, embeddedness, graphicality and so on.
In stark contrast, most higher-order flows possess various `positivity-losing' properties \cite{Blatt2010}.
Elastic flows are typical examples of fourth-order flows (see a survey \cite{MPP21}), which may lose positivity \cite{miura2021optimal}.
In particular, if an initial closed curve $\gamma_0$ is contained in a half-plane $H$ (or more generally in a convex set), then the curve shortening flow from $\gamma_0$ moves only within $H$, while the elastic flow is possible to protrude from $H$.
In this regard, an interesting problem is posed by G.\ Huisken (cf.\ \cite[p.118]{MPP21}) about the possibility of a `migration' phenomenon: \emph{if an immersed closed curve $\gamma_0$ is contained in the upper half-plane, then is it possible to prove that the (length-penalized) elastic flow starting from $\gamma_0$ cannot be contained in the lower half-plane at any positive time?}
Up to now this problem is still totally open.

In this paper we study some variants of Huisken's problem.
More precisely, instead of closed curves, we consider open curves under the so-called natural boundary condition, and we address both the length-penalized and length-preserving elastic flows.
In the length-preserving case, we prove that the migration phenomenon indeed occurs.
In addition, in many other cases for both length-preserving and length-penalized flows, we find out various migration phenomena through numerical computations.
To the authors' knowledge, our study would provide the first results on the migration of elastic flows.

Consider the \emph{length-preserving elastic flow}, which is defined by a smooth one-parameter family of immersed curves $\gamma:[0,1]\times[0,\infty)\to\mathbf{R}^2$ such that
\begin{align}\label{eq:flow}
  \partial_t\gamma = -2\nabla_s^2\kappa-|\kappa|^2\kappa+\lambda\kappa,
\end{align}
where $s$ denotes the arclength parameter, $\kappa=\kappa[\gamma]:=\partial_s^2\gamma$ the curvature vector ($\partial_s=|\partial_x\gamma|^{-1}\partial_x$), and $\nabla_s\psi:=\partial_s\psi-\langle \partial_s\psi,\partial_s\gamma \rangle \partial_s\gamma$ the normal derivative along $\gamma$, and $\lambda$ is the time-dependent nonlocal quantity given by
\begin{equation}
  \lambda(t) = \lambda[\gamma(\cdot,t)] := \frac{\int_{\gamma(\cdot,t)}\langle 2\nabla_s^2\kappa+\kappa^3,\kappa \rangle ds}{\int_{\gamma(\cdot,t)}|\kappa|^2ds}.
\end{equation}
The bracket $\langle \cdot,\cdot \rangle$ denotes the Euclidean inner product.
The above nonlocal equation arises as an $L^2(ds)$-gradient flow of the bending energy
\begin{equation}
  B[\gamma]:=\int_\gamma |\kappa|^2ds
\end{equation}
under the length-preserving constraint $\frac{d}{dt}L[\gamma(\cdot,t)]\equiv0$, where $L[\gamma]:=\int_{\gamma}ds$.
In addition, we impose the \emph{natural boundary condition} in which the endpoints are fixed and the curvature vanishes there: for given $p_0,p_1\in\mathbf{R}^2$,
\begin{equation}\label{eq:BC}
  \gamma(0,t)=p_0,\ \gamma(1,t)=p_1,\ \kappa(0,t)=\kappa(1,t)=0 \quad \mbox{for all $t\geq0$}.
\end{equation}
Thanks to this boundary condition the flow indeed decreases the bending energy while keeping the length fixed.
Long-time existence and sub-convergence of the flow \eqref{eq:flow} under the boundary condition \eqref{eq:BC} are proven by Dall'Acqua--Lin--Pozzi \cite{DLP2014} (see also a survey \cite{MPP21} for related results on elastic flows).

Now we state our main result.
Let $I:=(0,1)$ and $\bar{I}:=[0,1]$.
Let
$$H_\pm:=\{x\in\mathbf{R}^2 \mid \pm\langle x,e_2 \rangle \geq 0 \}$$
be the closed upper and lower half-planes, respectively, and let 
$$\Lambda_0:=\{x\in\mathbf{R}^2 \mid \langle x,e_2 \rangle =0\}$$ 
be the boundary line.
We also write the (interior) open half-planes as
$$H_{\pm}^\circ:=H_\pm\setminus\Lambda_0.$$ 
Our result asserts that if the endpoints are `pinned' on the boundary $\Lambda_0$, then an initial curve in the upper half-plane can be driven into the lower half-plane.

\begin{theorem}[Migrating elastic flow]\label{thm:main_migrating}
  There exists $c\in(0,1]$ with the following property:
  Let $L>0$ and $p_0,p_1\in \Lambda_0\subset \mathbf{R}^2$ such that $0<|p_0-p_1|< cL$.
  Then there exists a smooth solution $\gamma:\bar{I}\times[0,\infty)\to\mathbf{R}^2$ to the length-preserving elastic flow \eqref{eq:flow} of length $L$ under the natural boundary condition \eqref{eq:BC}, with the property that
  there exist $0<t_0<t_1$ such that $\gamma(I\times[0,t_0])\subset H_+^\circ$ and $\gamma(I\times[t_1,\infty))\subset H_-^\circ$.
  \if0
  \begin{itemize}
    \item there exists $t_0>0$ such that for any $t\in[0,t_0]$ the image of the curve $\gamma(\cdot,t)$ is contained in $H_+$, and
    \item there exists $t_1>0$ such that for any $t\in[t_1,\infty)$ the image of the curve $\gamma(\cdot,t)$ is contained $H_-$.
  \end{itemize}
  \fi
\end{theorem}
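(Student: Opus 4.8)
The plan is to exploit the variational structure of the flow. By the result of Dall'Acqua--Lin--Pozzi \cite{DLP2014}, the flow \eqref{eq:flow} under \eqref{eq:BC} exists globally and, along a sequence of times, sub-converges smoothly to a critical point of $B$ under the length and boundary constraints, that is, to a \emph{pinned elastica} of length $L$ with endpoints $p_0,p_1\in\Lambda_0$ and vanishing curvature there; moreover $t\mapsto B[\gamma(\cdot,t)]$ is non-increasing. I would therefore reduce the theorem to two tasks: understanding which pinned elasticae can arise as limits and locating one strictly inside $H_-^\circ$, and then constructing an initial datum in $H_+^\circ$ whose flow selects that lower limit. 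Throughout I normalise $p_0=(-d/2,0)$ and $p_1=(d/2,0)$ with $d:=|p_0-p_1|$, and I record the two symmetries that will be used repeatedly: the reflection $R(x,y)=(x,-y)$ across $\Lambda_0$ and the reflection across the perpendicular bisector of $p_0p_1$; each maps solutions of \eqref{eq:flow}--\eqref{eq:BC} to solutions.

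Next comes the classification and an energy gap. I would parametrise pinned elasticae explicitly by Jacobi elliptic functions and single out the symmetric one-arch solution, which in the regime $d<cL$ (this is where the constant $c$ is fixed) is the energy minimiser among pinned length-$L$ curves and which, apart from its endpoints, lies strictly on one side of $\Lambda_0$. Write $m_+\subset H_+^\circ$ for the upward arch and $m_-:=R(m_+)\subset H_-^\circ$ for the downward one; by symmetry $B[m_+]=B[m_-]=:E_0$. I then claim there is a gap $\delta>0$ such that every pinned elastica of length $L$ other than $m_\pm$ has energy at least $E_0+\delta$, the nearest competitor being the $S$-shaped second buckling mode. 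A further feature of the arch, to be read off from the explicit solution, is that it leaves each endpoint \emph{transversally}, i.e.\ its tangent at $p_0$ points strictly into the open half-plane; this transversality is exactly what will let me pass from convergence to open-half-plane containment at the end.

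The heart of the argument is to produce an initial curve $\gamma_0\subset H_+^\circ$ whose flow converges to $m_-$. I would take $\gamma_0$ of length $L$, pinned, lying (except for its endpoints) strictly in the open upper half-plane, but \emph{over-bent}: an arch that bends past the vertical so that, although still above $\Lambda_0$, it is poised to topple across the line. Two things must be arranged simultaneously, and their interplay is the main obstacle. On the one hand I need the energy bound $B[\gamma_0]<E_0+\delta$, so that the sub-convergent limit is forced by monotonicity of $B$ to be one of the two minimisers $m_\pm$. On the other hand I must rule out the upper limit $m_+$, and here the absence of a maximum or comparison principle---precisely the positivity-losing feature that makes migration possible---also prevents me from invoking any order preservation to decide whether the curve stays up or falls. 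The tension is quantitative: being over-bent enough to leave the basin of $m_+$ costs bending energy, while remaining below $E_0+\delta$ limits that cost, and it is the choice $d<cL$ (large excess length, hence a tall arch and a wide gap $\delta$) that should make the gap dominate the toppling threshold. The mechanism I would pursue is to show that the flow from such a $\gamma_0$ enters, at some finite time, the Łojasiewicz--Simon neighbourhood of $m_-$ in which convergence to $m_-$ is guaranteed; establishing this entrance---for instance by a continuity and connectedness argument along a one-parameter family of over-bent initial data interpolating the basins of $m_+$ and $m_-$, with the reflection $R$ used to identify the lower basin---is where the real work lies.

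Finally I would upgrade and conclude. Sub-convergence to the isolated critical point $m_-$, together with a Łojasiewicz--Simon gradient inequality (elasticae being analytic), promotes the sub-convergence to full smooth convergence $\gamma(\cdot,t)\to m_-$ as $t\to\infty$. Since $m_-\subset H_-^\circ$ on $I$ and leaves its endpoints transversally into $H_-$, $C^1$-convergence yields some $t_1$ with $\gamma(I\times[t_1,\infty))\subset H_-^\circ$: in the interior $m_-$ is bounded away from $\Lambda_0$, so uniform convergence suffices, while near the endpoints the transversal downward tangent of $m_-$ persists under $C^1$-convergence and keeps the curve strictly below the line. Symmetrically, because $\gamma_0\subset H_+^\circ$ with transversal upward tangents at its endpoints, smooth dependence of the flow on time gives $t_0>0$ with $\gamma(I\times[0,t_0])\subset H_+^\circ$. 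Combining the two containments yields the asserted migration with $0<t_0<t_1$.
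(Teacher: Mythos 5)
Your overall framework---global existence and subconvergence from \cite{DLP2014}, monotonicity of $B$, an energy gap isolating the two minimizing arcs, uniqueness of the limit upgrading to full convergence, and $C^1$-convergence plus transversality at the endpoints to get containment in the open half-planes---matches the paper. But there is a genuine gap at exactly the step you flag as ``where the real work lies'': you never actually rule out the upper arc $m_+$ as the limit. The proposed ``continuity and connectedness argument along a one-parameter family of over-bent initial data interpolating the basins of $m_+$ and $m_-$'' is not a proof: basins of attraction of a fourth-order parabolic flow need not be open, closed, or connected in any way that makes such an interpolation argument conclusive, and even if it were, it would at best show that \emph{some} datum in the family leaves the basin of $m_+$, not that any such datum lies in $H_+^\circ$, has admissible energy, and lands in the basin of $m_-$. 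Without a computable invariant that distinguishes the two arcs and is controlled along the flow, the argument does not close.

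The paper supplies precisely such an invariant: the \emph{total curvature} $TC$. The two arcs have opposite signs of $TC$ (negative for the upper arc, positive for the lower one), the level set $\{TC=0\}$ in $A_\ell$ is an energy mountain pass of height at least $3\varpi^*$ as $\ell\to0$ (Lemma \ref{lem:lowerbound}, via Hopf's Umlaufsatz and the closed-curve lower bound of \cite{MRacv,Miura_LiYau}), while the flow's energy stays below $2\varpi^*$; hence the sign of $TC$ cannot change, and the limit arc is forced to be the lower one. Correspondingly, the paper's initial datum is not an over-bent arch but a carefully perturbed and loop-rescaled copy of the upper \emph{loop} $\gamma_\mathrm{loop}^{\ell,+}$, which lies in $H_+^\circ$, has energy strictly below $B[\gamma_\mathrm{loop}^{\ell,+}]$, and---crucially---has $TC>0$, the same sign as the lower arc. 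Your over-bent arch, by contrast, would have tangent angles like those of the upper arc and hence $TC<0$, so by the very mountain-pass mechanism that makes the proof work it would be expected to converge to $m_+$, not $m_-$. (A minor further inaccuracy: under the pinned/natural boundary condition the second-lowest critical points for small $\ell$ are the loops, not an ``$S$-shaped second buckling mode''; the classification of \cite{Ydcds} contains only arcs, loops, and their $N$-fold extensions.)
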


We expect that we can take $c=1$, i.e., the smallness assumption in terms of $c$ is technical, but this is left open.
Our method strongly relies on the smallness of $c$.

Now we discuss the idea of our proof.
Our main ingenuity lies in the choice of the constraints, yielding an effective reduction of the associated variational structure; in fact, our proof is completely variational.
A recent rigorous classification of critical points under the pinned (or natural) boundary condition shows that for general $\ell:=|p_0-p_1|\in(0,L)$, the global minimizers are given by two convex arcs that lie in the half-planes $H_+$ and $H_-$, respectively, and also all the other critical points are unstable; see e.g.\ \cite{Ydcds,MYarXiv2209,MYarXiv2301}.
We will first observe that if (and in fact only if) we assume the smallness $\ell\ll L$, the critical points of second smallest energy are given by two locally-convex loops, again contained in $H_\pm$, respectively.
Then we carefully perturb the upper loop to construct an initial curve that is still contained in $H_+$, but has less energy than the loop so that the convergence limit must be one of the two arcs.
The remaining task is to prove that the flow tends to the desired lower arc in $H_-$.
To this end we show that there is an energy mountain-pass between the upper loop and the upper arc, and that this mountain-pass cannot be crossed by any small-energy elastic flow, where we again importantly use the smallness assumption $\ell\ll L$.

\if0
We finally discuss how our problem is different from the original one of Huisken.
The main difference would be the set of possible limit configurations; in our case global minimizers are only two and they have different values of a geometric quantity, while in the closed case any minimizer is a round circle with uncountably many choices of the position of the center.
Therefore our strategy relying on simple energy comparisons would not be directly applicable to Huisken's problem.
In addition, our boundary condition of fixed endpoints seems to make the flow easier to migrate since it makes the flow stick to the boundary of the half-plane.
\fi

In the rest of this section we exhibit some open problems.
At this moment it is not clear how our problem is related to Huisken's original problem; it would be natural to expect that our boundary condition plays the role of a driving force that makes the flow stick to the boundary and hence easier to migrate.
Here, instead of Huisken's problem, we discuss our broad expectation that elastic flows are possible to migrate in more general cases under the natural boundary condition.
This expectation will be supported by our numerical computations.

The most immediate question is to ask whether we can take $c=1$ in Theorem \ref{thm:globalexistence}.
Here we already encounter the essential technical difficulty that the variational structure is so different that the second smallest critical points are laid across the two half-planes.
However, we conjecture that $c=1$ is allowable as our numerical computation suggests; even if $\frac{\ell}{L}\sim1$, we can observe a `loop-sliding' solution (as in Figure \ref{fig:constrained_asymmetric_short} below) similar to the case of $\frac{\ell}{L}\ll1$ (as in Figure \ref{fig:constrained_asymmetric_long} below).

Another focus would be on the symmetry.
The fact is that the flow constructed here starts from an asymmetric configuration, so it would also be interesting to ask whether Theorem \ref{thm:globalexistence} holds under the additional condition that the initial curve is reflectionally symmetric with respect to the vertical axis.
In fact, we could numerically find that there is a symmetric but migrating elastic flow.
The flow possesses two `loop-sliding' structures in a symmetric position, which suggests that the `loop-sliding' behavior would be one of the generic features of elastic flows.

Moreover, it is also natural to consider the length-penalized elastic flow, where $\lambda$ is just a given positive constant in \eqref{eq:flow}, as in Huisken's original problem.
Here the main analytical difficulty is that a straight segment is always a trivial global minimizer, which is a strong attractor (to which many flows converge) but approachable from both the upper and lower sides so that more delicate analysis is needed.
Notwithstanding, even for this length-penalized flow, we could numerically find various migrating examples.
On the other hand, we could also discover some examples of initial curves that fail to migrate in the length-penalized case while migrating in the length-preserved case.
Such examples are often observed in the regime that the effect of the length functional is much stronger than the bending energy, or more precisely, $\lambda\ell^2\gg1$ --- this quantity is scale invariant since the rescaling $\tilde{\gamma}(x,t):=\frac{1}{\ell}\gamma(x,\ell^4t)$ normalizes $\ell$ to be $1$ and produces the flow with $\lambda$ replaced by $\lambda\ell^2$.
In this sense our results suggest that there is a nontrivial connection between the migration behavior and the parameter $\lambda\ell^2$.
However we point out that, even from such a scaling point of view, closed curves are much harder to handle since in general the factor $\lambda$ can be normalized to $1$ just by rescaling, and hence the scaling effects essentially depend on the a priori-unknown and moving-in-time geometry of curves (unlike the given and fixed parameter $\lambda\ell^2$ in our problem).

This paper is organized as follows:
In Section \ref{sect:analytic} we give an analytic proof of Theorem \ref{thm:main_migrating}.
Section \ref{sect:numerical} exhibits various numerical computations, concerning not only the behavior corresponding to Theorem \ref{thm:main_migrating} but also other cases for which no analytical results exist.

\subsection*{Acknowledgments}

KT is supported by JSPS KAKENHI Grant Numbers 19K14590 and 21H00990.
TM is supported by JSPS KAKENHI Grant Numbers 18H03670, 20K14341, and 21H00990, and by Grant for Basic Science Research Projects from The Sumitomo Foundation.

\section{Existence of migrating elastic flows}\label{sect:analytic}

In this section we prove Theorem \ref{thm:main_migrating}, focusing on the case that $L=1$, $\ell\in(0,1)$, $p_0=(0,0)$, and $p_1=(\ell,0)$ just for notational simplicity.
This does not lose generality since our problem is invariant with respect to similarity transformations.

We first prepare some notations (including general $L>0$ for later use).
For $0\leq \ell < L$, let $A_{\ell,L}$ be the set of immersed $H^2$-Sobolev planar curves of length $L$ and fixed endpoints $p_0$ and $p_1$:
\begin{align*}
    A_{\ell,L} := \{\gamma\in H^2_\mathrm{imm}(I;\mathbf{R}^2) \mid \gamma(0)=(0,0),\ \gamma(1)=(\ell,0),\ L[\gamma]=L \},
\end{align*}
where 
$$H^2_\mathrm{imm}(I;\mathbf{R}^2) := \big\{ \gamma\in H^2(I;\mathbf{R}^2) \mid \min_{x\in\bar{I}}|\gamma'(x)|>0  \big\}.$$
Note that, by the Sobolev embedding $H^2(I)\hookrightarrow C^1(\bar{I})$, the above pointwise conditions up to first order are well defined, and also the arclength reparameterization is well defined in the sense that the resulting curve is still of class $H^2$.

In particular, for simplicity, in the case of unit length $L=1$ we write
$$A_\ell:=A_{\ell,1}.$$
In this case the arclength reparameterized curve has the same domain $I$.

\subsection{Long-time existence and convergence}

First of all we recall the long-time existence and convergence result by Dall'Acqua--Lin--Pozzi, which is arranged for our purpose.
In particular their original statement claims the last subconvergence statement in a slightly different way, but their proof immediately implies the assertion below.

\begin{theorem}[\cite{DLP2014}]\label{thm:globalexistence}
  Let $\ell\in(0,1)$.
  Let $\gamma_0:[0,1]\to\mathbf{R}^2$ be a smoothly immersed curve such that $L[\gamma_0]=1$, $\gamma_0(0)=p_0$, $\gamma_0(1)=p_1$, and $\kappa[\gamma_0](0)=\kappa[\gamma_0](1)=0$.
  Then there is a global-in-time smooth solution $\gamma:[0,1]\times[0,\infty)\to\mathbf{R}^2$ to the initial value problem,
  \begin{equation}\label{eq:elasticflow}
    \begin{cases}
      \partial_t\gamma = -2\nabla_s^2\kappa-|\kappa|^2\kappa+\lambda\kappa \quad \text{on}\ [0,1]\times[0,\infty),\\
      \gamma(x,0)=\gamma_0(x) \quad \text{for}\ x\in[0,1],\\
      \gamma(0,t)=p_0,\ \gamma(1,t)=p_1 \quad \text{for}\ t\in[0,\infty),\\
      \kappa[\gamma](0,t)=\kappa[\gamma](1,t)=0 \quad\text{for}\ t\in[0,\infty),
    \end{cases}
  \end{equation}
  where
  \begin{equation}\label{eq:lambda_nonlocal}
    \lambda(t) = \lambda[\gamma(\cdot,t)] = \frac{\int_{\gamma(\cdot,t)}\langle 2\nabla_s^2\kappa+\kappa^3,\kappa \rangle ds}{\int_{\gamma(\cdot,t)}|\kappa|^2ds}.
  \end{equation}
  Moreover, for any sequence $t_j\to\infty$ there is a subsequence $\{t_{j'}\}_{j'}$ such that, up to (arclength) reparameterization, $\gamma(\cdot,t_{j'})$ converges smoothly to a critical point $\gamma_\infty$, i.e., the curve $\gamma_\infty$ solves the following boundary value problem for some $\lambda\in\mathbf{R}$,
  \begin{equation}\label{eq:criticalpoint}
    \begin{cases}
      -2\nabla_s^2\kappa-|\kappa|^2\kappa+\lambda\kappa=0,\\
      \gamma(0)=p_0,\ \gamma(1)=p_1,\ \kappa[\gamma](0)=\kappa[\gamma](1)=0.
    \end{cases}
  \end{equation}
  The smooth convergence precisely means that $\lim_{j'\to\infty}\|\gamma(\cdot,t_{j'})-\gamma_\infty\|_{C^m([0,1];\mathbf{R}^n)}=0$ holds for any integer $m\geq0$.
\end{theorem}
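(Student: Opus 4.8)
The statement combines long-time existence with subconvergence for a constrained fourth-order geometric flow, and the plan is to run the by-now-standard parabolic program, paying particular attention to the length constraint, the nonlocal multiplier, and the boundary. \emph{Short-time existence.} Since \eqref{eq:elasticflow} is geometric, its right-hand side is a normal vector field and the equation is only degenerate parabolic in the given parameter $x$. I would first fix a gauge---either prescribing a tangential velocity component (a DeTurck-type term) or writing $\gamma$ as a normal graph over a smooth reference curve---so as to recast \eqref{eq:elasticflow} as a genuinely parabolic fourth-order quasilinear equation. The decisive point is that the boundary conditions in \eqref{eq:BC}, which in the scalar normal-graph formulation amount to two conditions at each endpoint (vanishing normal displacement from the fixed position, and $\kappa=0$), satisfy the Lopatinskii--Shapiro complementing condition, so that the linearization generates an analytic semigroup on a suitable H\"older or $L^p$ scale. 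Short-time existence and smoothness then follow from linearization and a contraction argument, with the nonlocal factor $\lambda(t)$ treated as a continuous lower-order perturbation once its denominator is seen to be positive; parabolic bootstrap upgrades to full smoothness from the smooth, compatible initial datum.

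\emph{Dissipation and a priori estimates.} By construction $\lambda$ is exactly the Lagrange multiplier enforcing $\frac{d}{dt}L[\gamma(\cdot,t)]\equiv0$, so the length stays equal to $1$. Differentiating $B$ and integrating by parts, all boundary terms vanish because the endpoints are fixed (so the velocity vanishes there) and $\kappa=0$ at the endpoints; using in addition $\int\langle\kappa,\partial_t\gamma\rangle\,ds=-\frac{d}{dt}L=0$ one obtains
\begin{equation}
  \frac{d}{dt}B[\gamma(\cdot,t)]=-\int_{\gamma(\cdot,t)}|\partial_t\gamma|^2\,ds\le0,
\end{equation}
so $B$ is non-increasing, bounded below by $0$, and $\int_0^\infty\|\partial_t\gamma\|_{L^2}^2\,dt\le B[\gamma_0]<\infty$. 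The fixed length and this energy bound control $\|\kappa\|_{L^2}$ from above. For the denominator of $\lambda$ I would use that any admissible curve (length $1$, endpoints at distance $\ell<1$) cannot be a straight segment, so a compactness and lower-semicontinuity argument yields a uniform lower bound $\int|\kappa|^2\,ds\ge c(\ell)>0$; combined with Gagliardo--Nirenberg interpolation on $I$ this keeps $\lambda(t)$ bounded. I would then estimate the evolution of the curvature: for each $k$, differentiate $\int|\nabla_s^k\kappa|^2\,ds$ in time, integrate by parts (using $\kappa=0$ at the endpoints to discard, or interpolate away, the resulting boundary terms), and absorb the top-order contributions by interpolation, obtaining that every $\int|\nabla_s^k\kappa|^2\,ds$ stays bounded, and in fact uniformly in $t$. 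A continuation argument then upgrades short-time to global existence, since a finite maximal time would contradict the uniform smooth bounds, which permit restarting the flow.

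\emph{Subconvergence.} After arclength reparameterization, the uniform-in-time control of all $\int|\nabla_s^k\kappa|^2\,ds$, together with the fixed endpoints and length, yields uniform $C^m(\bar{I})$ bounds on $\gamma(\cdot,t)$ for every $m$. Moreover the uniform bounds make $t\mapsto\|\partial_t\gamma(\cdot,t)\|_{L^2}^2$ uniformly continuous, so its integrability forces $\|\partial_t\gamma(\cdot,t)\|_{L^2}\to0$ as $t\to\infty$. Given any $t_j\to\infty$, Arzel\`a--Ascoli extracts a subsequence along which $\gamma(\cdot,t_{j'})\to\gamma_\infty$ smoothly; passing to the limit in the equation, and using $\lambda(t_{j'})\to\lambda[\gamma_\infty]=:\lambda_\infty$ by smooth convergence, shows $\partial_t\gamma\equiv0$ in the limit, i.e.\ $\gamma_\infty$ solves \eqref{eq:criticalpoint} with $\lambda=\lambda_\infty$.

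The step I expect to be the main obstacle is the uniform higher-order a priori estimate in the presence of the boundary: only $\kappa$, and not its normal derivatives, vanishes at the endpoints, so the boundary terms produced by the repeated integrations by parts must be carefully controlled (typically by interpolation exploiting the natural boundary condition), all while keeping the nonlocal $\lambda(t)$ bounded through the uniform positivity of $\int|\kappa|^2\,ds$. This is precisely where the natural boundary condition and the constraint $\ell<1$ enter in an essential way.
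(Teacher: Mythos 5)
The paper does not prove this statement at all: it is imported verbatim (modulo a rephrased subconvergence clause) from Dall'Acqua--Lin--Pozzi \cite{DLP2014}, and the authors explicitly say only that ``their proof immediately implies the assertion below.'' So the only meaningful comparison is with the strategy of that reference, and your outline is faithful to it: short-time existence for the gauge-fixed quasilinear fourth-order problem with the natural boundary conditions, the energy identity $\frac{d}{dt}B=-\int|\partial_t\gamma|^2\,ds$ obtained by using that $\lambda$ is exactly the multiplier killing $\frac{d}{dt}L$, uniform bounds on $\int|\nabla_s^k\kappa|^2\,ds$ by interpolation, continuation to global existence, and subconvergence via $\|\partial_t\gamma\|_{L^2}\to0$ and Arzel\`a--Ascoli. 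Your two flagged delicate points are the right ones, and your quantitative observation that $\ell<1$ forces $\int|\kappa|^2\,ds\ge c(\ell)>0$ (hence $\lambda$ stays controlled) is correct and essential.

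That said, be aware that what you have written is a program, not a proof: the entire analytic content of \cite{DLP2014} is concentrated in the steps you assert in one sentence each, above all (i) the uniform-in-time higher-order curvature estimates, where the repeated integrations by parts produce boundary terms involving $\nabla_s^j\kappa$ at the endpoints that do \emph{not} vanish for $j\ge1$ and must be handled by the specific structure of the natural boundary condition together with interpolation, and (ii) the fact that boundedness of $\lambda(t)$ alone is not enough for the higher-order estimates --- one also needs to track how $\lambda$ and its derivatives enter the commutator terms. Neither step is wrong in your outline, but neither is done, and for a theorem of this type the correctness lives entirely in those computations. Since the paper treats this result as a black box, the appropriate resolution here is the same: cite \cite{DLP2014} rather than reprove it, and if you want to justify the slightly restated subconvergence clause, the only thing to check is that the fixed endpoints remove the need for the translations appearing in the closed-curve setting, so that arclength reparameterization alone suffices.
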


\begin{remark}\label{rem:monotonicity}
  Along the flow $\frac{d}{dt}B[\gamma(\cdot,t)]\leq0$ and $L[\gamma(\cdot,t)]\equiv1$.
\end{remark}

\begin{remark}\label{rem:fullconvergence}
  Given an initial curve, if we can deduce that there is only a unique candidate of the limit critical point $\gamma_\infty$ up to reparameterization, then we automatically get the full convergence in the sense that, up to the arclength reparameterization, $\gamma(\cdot,t)\to\gamma_\infty$ smoothly as $t\to\infty$.
  In fact, if not, there are $m\in\mathbf{Z}_{\geq0}$, $\delta>0$, and $t_j\to\infty$ such that $\|\gamma(\cdot,t_j)-\gamma_\infty\|_{C^m}\geq\delta$ for all $j$.
  However the subconvergence statement implies that there is a subsequence that converges to a critical point, which is $\gamma_\infty$ by uniqueness, but this is a contradiction.
\end{remark}

\subsection{Stationary solutions}

For an immersed curve $\gamma\in H^2(I;\mathbf{R}^2)\subset C^1(\bar{I};\mathbf{R}^2)$, let $\theta=\theta[\gamma]\in H^1(I)\subset C(\bar{I})$ denote the tangential angle function defined so that
$\partial_s\gamma = (\cos\theta,\sin\theta),$
which is unique modulo $2\pi$.
Let $k=k[\gamma]\in L^2(I)$ denote the signed curvature defined by
$k=\partial_s\theta,$
and let $TC$ denote the total (signed) curvature
\begin{equation}\label{eq:TC_angle}
TC[\gamma]:=\int_\gamma kds = \theta(1)-\theta(0).    
\end{equation}

\if0
Given $r\in[0,1)$, let $q_\mathrm{arc}\in(0,1)$ denote a unique solution to
$$\frac{2E(q)}{K(q)}-1 = r,$$
and let $q_\mathrm{loop}\in(0,1)$ denote a unique solution to
$$\frac{2E(q)}{K(q)}-1 = -r.$$
Then $\gamma_\mathrm{arc}^{r,\pm}$ and $\gamma_\mathrm{loop}^{r,\pm}$ denote the arclength parameterized convex arcs and loops, respectively, defined as follows:
\fi

Recall that any critical point of $B$ in $A_\ell$ satisfies \eqref{eq:criticalpoint} for some $\lambda\in\mathbf{R}$, cf.\ \cite{Ydcds,Miura_LiYau} (see also \cite{MYarXiv2209}).
In addition, if $\ell=0$, then any critical point is a half-fold figure-eight elastica or its $N$ times extension \cite{Miura_LiYau}; if $\ell>0$, then any critical point is one of the upper and lower arcs $\gamma_\mathrm{arc}^{\ell,\pm,1}$, the upper and lower loops $\gamma_\mathrm{loop}^{\ell,\pm,1}$, and their suitable $N$ times extensions $\gamma_\mathrm{arc}^{\ell,\pm,N}$ and $\gamma_\mathrm{loop}^{\ell,\pm,N}$ with $N\geq2$ \cite{Ydcds}.
In the following lemma we summarize fundamental properties of the critical points which we will use later (see also Figure~\ref{fig:criticalpoints}).

\begin{figure}
    \centering
    \begin{tabular}{cc}
        \includegraphics[page=1]{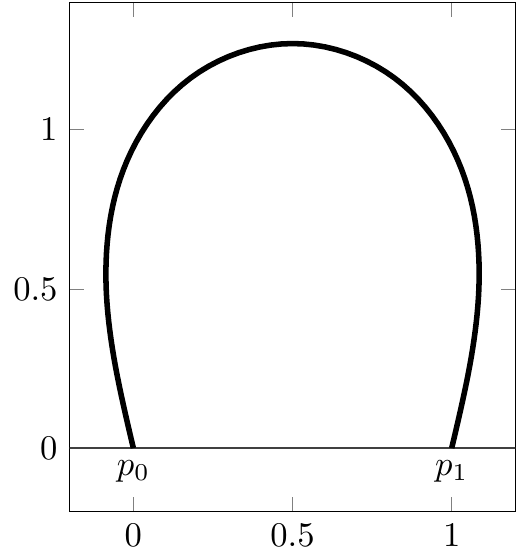} & \includegraphics[page=2]{criticalpoints.pdf} \\
        (a) upper arc $\gamma_\mathrm{arc}^{\ell,+,1}=\gamma_\mathrm{arc}^{\ell,+}$ & (b) upper loop $\gamma_\mathrm{loop}^{\ell,+,1}=\gamma_\mathrm{loop}^{\ell,+}$
    \end{tabular}
    
    \caption{Critical points; the upper arc and the upper loop.}
    \label{fig:criticalpoints}
\end{figure}

\begin{lemma}[Basic properties of critical points]\label{lem:allcriticalpoints}
  Let $\ell\in(0,1)$.
  Let $\gamma\in A_\ell$ be a solution to \eqref{eq:criticalpoint} for some $\lambda\in\mathbf{R}$.
  Then, up to reparameterization, $\gamma$ is given by one of the constant-speed smooth (analytic) curves 
  $$\gamma_\mathrm{arc}^{\ell,+,N},\gamma_\mathrm{arc}^{\ell,-,N},\gamma_\mathrm{loop}^{\ell,+,N},\gamma_\mathrm{loop}^{\ell,-,N}\in A_\ell$$
  for some integer $N\geq1$, where for simplicity we also write
  $$\text{$\gamma_\mathrm{arc}^{\ell,\pm}:=\gamma_\mathrm{arc}^{\ell,\pm,1}$ and $\gamma_\mathrm{loop}^{\ell,\pm}:=\gamma_\mathrm{loop}^{\ell,\pm,1}$},$$
  such that all the following properties hold:
  The curves $\gamma_\mathrm{arc}^{\ell,+,N}$ and $\gamma_\mathrm{loop}^{\ell,+,N}$ are the reflections of $\gamma_\mathrm{arc}^{\ell,-,N}$ and $\gamma_\mathrm{loop}^{\ell,-,N}$ through the $x$-axis, respectively, and
    \begin{align}
      & \text{$\gamma_\mathrm{arc}^{\ell,\pm}(I)\subset H_\pm^\circ$ and $\gamma_\mathrm{loop}^{\ell,\pm}(I)\subset H_\pm^\circ$,} \label{eq:cri2}\\
      & \text{$\pm\theta[\gamma_\mathrm{arc}^{\ell,\pm}](0)\in(0,\pi)$, $\pm\theta[\gamma_\mathrm{arc}^{\ell,\pm}](1)\in(-\pi,0)$, and $(\pm\theta[\gamma_\mathrm{arc}^{\ell,\pm}])'<0$ on $I$,} \label{eq:cri8}\\
      & \text{$\pm\theta[\gamma_\mathrm{loop}^{\ell,\pm}](0)\in(0,\tfrac{\pi}{2})$, $\pm\theta[\gamma_\mathrm{loop}^{\ell,\pm}](1)\in(\tfrac{3\pi}{2},2\pi)$, and $(\pm\theta[\gamma_\mathrm{loop}^{\ell,\pm}])'>0$ on $I$,} \label{eq:cri3}
    \end{align}
    and also there is $a\in(0,\tfrac{1}{2})$ such that
    \begin{align}
      & \text{$\gamma_\mathrm{loop}^{\ell,\pm}(a)=\gamma_\mathrm{loop}^{\ell,\pm}(1-a)$, $\theta[\gamma_\mathrm{loop}^{\ell,\pm}](a)\in(0,\tfrac{\pi}{2})$, and $\theta[\gamma_\mathrm{loop}^{\ell,\pm}](1-a)\in(\tfrac{3\pi}{2},2\pi)$.} \label{eq:cri7}
    \end{align}
    In addition, the total curvature and the bending energy satisfy
    \begin{align}
      & \text{$ \mp TC[\gamma_\mathrm{arc}^{\ell,\pm}]>0$ and $\pm TC[\gamma_\mathrm{loop}^{\ell,\pm}]>0$,} \label{eq:cri4}\\
      & \text{$0<B[\gamma_\mathrm{arc}^{\ell,+}]=B[\gamma_\mathrm{arc}^{\ell,-}] <  B[\gamma_\mathrm{loop}^{\ell,+}]=B[\gamma_\mathrm{loop}^{\ell,-}]$,} \label{eq:cri1}\\
      & \text{$\lim_{\ell\to0}B[\gamma_\mathrm{arc}^{\ell,\pm}] = \lim_{\ell\to0}B[\gamma_\mathrm{loop}^{\ell,\pm}]=\varpi^*:=\inf_{\gamma\in A_0}B[\gamma]$,}\label{eq:cri5}\\
      & \text{$B[\gamma_\mathrm{arc}^{\ell,\pm,N}] = N^2B[\gamma_\mathrm{arc}^{\ell,\pm}]$ and $B[\gamma_\mathrm{loop}^{\ell,\pm,N}] = N^2B[\gamma_\mathrm{loop}^{\ell,\pm}]$.} \label{eq:cri6}
    \end{align}
\end{lemma}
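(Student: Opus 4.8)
The plan is to reduce the vector Euler--Lagrange system \eqref{eq:criticalpoint} to a scalar ODE for the signed curvature and then read off every assertion from the explicit elliptic-function representation of its solutions. Writing $\kappa=k\nu$ with $\nu$ the unit normal, the planar Frenet relations give $\nabla_s\kappa=k'\nu$ and $\nabla_s^2\kappa=k''\nu$, so \eqref{eq:criticalpoint} becomes the free-elastica equation
\begin{equation*}
  2k''+k^3-\lambda k=0,\qquad k(0)=k(1)=0,
\end{equation*}
with first integral $(k')^2+\tfrac14 k^4-\tfrac{\lambda}{2}k^2\equiv\mathrm{const}$. Its bounded solutions are the classical Euler elastica, expressible through the Jacobi functions $\sn,\cn,\dn$; combined with the constant-speed condition $|\gamma'|\equiv1$, the fixed endpoints, and the length constraint $L[\gamma]=1$, the classification of \cite{Ydcds} singles out exactly the arcs $\gamma_\mathrm{arc}^{\ell,\pm,N}$ and loops $\gamma_\mathrm{loop}^{\ell,\pm,N}$. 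First I would record their explicit parameterizations, in which the moduli $q_\mathrm{arc},q_\mathrm{loop}\in(0,1)$ are the unique roots of $\frac{2E(q)}{K(q)}-1=\pm\ell$; the reflection symmetry is then immediate, since reflecting across the $x$-axis sends $(\theta,k)\mapsto(-\theta,-k)$ and preserves both the equation and the boundary conditions, so it suffices to treat the $+$ curves and \emph{define} the $-$ curves as their reflections.

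For the geometric properties \eqref{eq:cri2}, \eqref{eq:cri8}, \eqref{eq:cri3}, \eqref{eq:cri7} I would work with the tangential angle $\theta(s)=\theta(0)+\int_0^s k$, together with the reflection symmetry of each critical point about the vertical line $x=\ell/2$, which forces $\theta(1-s)\equiv-\theta(s)\pmod{2\pi}$; taking continuous branches this reads $\theta(1-s)=-\theta(s)$ for the arc and $\theta(1-s)=2\pi-\theta(s)$ for the loop, so in particular $\theta(\tfrac12)=0$ respectively $\theta(\tfrac12)=\pi$. For the arc, $k$ has a single sign, so $\theta$ is strictly monotone; its endpoint values read off from the elliptic representation give $\pm\theta(0)\in(0,\pi)$ and $\pm\theta(1)\in(-\pi,0)$, whence $\sin\theta>0$ on $(0,\tfrac12)$ and $<0$ on $(\tfrac12,1)$, so the height $y(s)=\int_0^s\sin\theta$ is positive on the interior and vanishes only at the endpoints, giving \eqref{eq:cri2}. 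The loop angle is likewise strictly monotone but turns by a total of $(\pi,2\pi)$, yielding the ranges in \eqref{eq:cri3}; the self-intersection of \eqref{eq:cri7} is produced by the vertical symmetry, which pairs $s$ with $1-s$ and forces $\gamma(a)=\gamma(1-a)$ at the unique $a\in(0,\tfrac12)$ with $\gamma_1(a)=\ell/2$, the stated angle ranges following from $\theta(1-a)=2\pi-\theta(a)$. The total-curvature signs \eqref{eq:cri4} are then immediate from $TC=\theta(1)-\theta(0)$ and these angle ranges.

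The energy statements are handled separately. Property \eqref{eq:cri6} is elementary: the $N$-fold extension repeats the base profile on $N$ subintervals of length $1/N$, so its curvature is rescaled by $N$ and $B=\int k^2\,ds$ picks up exactly the factor $N^2$. For the strict ordering \eqref{eq:cri1} and the limit \eqref{eq:cri5} I would express $B[\gamma_\mathrm{arc}^{\ell,\pm}]$ and $B[\gamma_\mathrm{loop}^{\ell,\pm}]$ as explicit functions of $q_\mathrm{arc}$ and $q_\mathrm{loop}$ through the standard elliptic-integral formula for the bending energy of an elastica, reducing \eqref{eq:cri1} to a monotonicity property of this function of the modulus and \eqref{eq:cri5} to the fact that both moduli converge, as $\ell\to0$, to the common root $q^*$ of $2E(q)=K(q)$, at which the energy equals the borderline value $\varpi^*=\inf_{A_0}B$ identified in \cite{Miura_LiYau}.

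I expect the main obstacle to be the sharp energy comparison \eqref{eq:cri1} together with the $\ell\to0$ asymptotics \eqref{eq:cri5}, since these demand careful control of the monotonicity and limiting behavior of the elliptic-integral energy as a function of the modulus, in contrast to the purely qualitative angle-and-symmetry arguments underlying the remaining items. A secondary technical point is the strict interior containment in the open half-plane for the loop, where $\sin\theta$ changes sign and one must rule out the height $y$ returning to zero before the endpoint.
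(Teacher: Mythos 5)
Your proposal is correct and, at bottom, rests on the same foundation as the paper's own proof: the explicit elliptic-function classification of solutions to \eqref{eq:criticalpoint} from \cite{Ydcds}, together with \cite{Miura_LiYau} for identifying $\varpi^*$ in the limit \eqref{eq:cri5}. The difference is one of how much is reconstructed versus cited. The paper disposes of everything except \eqref{eq:cri5} with a one-line appeal to \cite{Ydcds}, whereas you rederive the geometric items \eqref{eq:cri2}, \eqref{eq:cri8}, \eqref{eq:cri3}, \eqref{eq:cri7}, \eqref{eq:cri4}, \eqref{eq:cri6} from the reflection symmetry $\theta(1-s)\equiv-\theta(s)\pmod{2\pi}$ and the strict monotonicity of $\theta$; these arguments are sound, and the ``secondary technical point'' you flag for the loop is in fact already settled by the same up-then-down argument for $y(s)=\int_0^s\sin\theta$ that you use for the arc, since along the loop $\sin\theta$ changes sign exactly once, at $\theta=\pi$, so $y$ is increasing on $(0,\tfrac12)$ and decreasing on $(\tfrac12,1)$ with $y(0)=y(1)=0$. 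Your treatment of \eqref{eq:cri5} coincides with the paper's: both moduli tend to the root $q^*$ of $2E(q)=K(q)$, the arcs and loops converge smoothly to the half-fold figure-eight elastica, and its minimality in $A_0$ is \cite[Proposition 2.6]{Miura_LiYau}. The one place your self-contained route leaves genuine work undone is the strict comparison \eqref{eq:cri1}: your reduction to monotonicity in the modulus $q$ of the elliptic-integral arch energy is the right one (since $\tfrac{2E(q)}{K(q)}-1$ is decreasing, one has $q_{\mathrm{arc}}<q^*<q_{\mathrm{loop}}$), but that monotonicity still has to be verified by an explicit computation with $E$ and $K$; in the paper this inequality is simply part of what is imported wholesale from \cite{Ydcds}.
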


\begin{proof}
  All but property \eqref{eq:cri5} follow by the contents in \cite{Ydcds}.
  Property \eqref{eq:cri5} also easily follows since from the explicit formulae we deduce that as $\ell\to0$ each of $\gamma_\mathrm{arc}^{\ell,\pm}$ and $\gamma_\mathrm{loop}^{\ell,\pm}$ smoothly converges to a half-fold figure-eight elastica, which is shown to be a minimizer of $B$ in $A_0$ \cite[Proposition 2.6]{Miura_LiYau}.
\end{proof}

From the above lemma we also obtain some elementary perturbative properties, the proof of which can be safely omitted. 

\begin{lemma}\label{lem:perturb}
  Let $\ell\in(0,1)$.
  Then there exists $\varepsilon>0$ such that for any $\gamma\in A_\ell$ the following properties hold:
  \begin{itemize}
      \item[(i)] If $\|\gamma-\gamma_\mathrm{loop}^{\ell,+}\|_{C^1}\leq\varepsilon$, then $\gamma(I)\subset H_+^\circ$, $\theta[\gamma](0)\in(0,\frac{\pi}{2})$, and $\theta[\gamma](1)\in(\frac{3\pi}{2},2\pi)$.
      \item[(ii)] If $\|\gamma-\gamma_\mathrm{arc}^{\ell,-}\|_{C^1}\leq\varepsilon$, then $\gamma(I)\subset H_-^\circ$, $\theta[\gamma](0)\in(-\pi,0)$, and $\theta[\gamma](1)\in(0,\pi)$.
      \item [(iii)] If $\|\gamma-\gamma_\mathrm{loop}^{\ell,+}\|_{C^1}\leq\varepsilon$ or $\|\gamma-\gamma_\mathrm{arc}^{\ell,-}\|_{C^1}\leq\varepsilon$, then $TC[\gamma]>0$.
      \item [(iv)] If $\|\gamma-\gamma_\mathrm{loop}^{\ell,+}\|_{C^1}\leq\varepsilon$, then $\theta[\gamma](\bar{I})\subset (0,2\pi)$, and there are $a\in(0,\frac{1}{2})$ and $b\in(\frac{1}{2},1)$ such that $\gamma(a)=\gamma(b)$ and $(\theta[\gamma])'>0$ on $[a,b]$.
      In particular, $\theta[\gamma](a)\in(0,\tfrac{\pi}{2})$ and $\theta[\gamma](b)\in(\tfrac{3\pi}{2},2\pi)$.
  \end{itemize}
\end{lemma}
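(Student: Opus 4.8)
The plan is to derive all four items from a single principle: $C^1$-smallness of $\gamma-\gamma_0$, with $\gamma_0$ the relevant critical point of Lemma~\ref{lem:allcriticalpoints} and $\gamma\in A_\ell$ sharing the same endpoints and length, forces the unit tangent $\gamma'/|\gamma'|$ to be $C^0$-close to that of $\gamma_0$, and hence, after fixing the continuous branch of $\theta[\gamma]$ whose value at $0$ is the representative nearest $\theta[\gamma_0](0)$, the lift $\theta[\gamma]$ is uniformly $C^0$-close to $\theta[\gamma_0]$. Indeed, since $\gamma_0$ is a fixed immersion its speed is bounded below, so $C^1$-closeness controls the $S^1$-valued tangent maps, and a standard winding argument (the difference of the two lifts is continuous, small at $x=0$, and congruent mod $2\pi$ to an $O(\varepsilon)$ angular distance, hence trapped in $(-\pi,\pi)$) upgrades this to $\|\theta[\gamma]-\theta[\gamma_0]\|_{C^0}\le C\varepsilon$. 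In particular the pointwise angle values, the range $\theta[\gamma](\bar I)$, and the total curvature $TC[\gamma]=\theta[\gamma](1)-\theta[\gamma](0)$ all converge to those of $\gamma_0$ as $\varepsilon\to0$.

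Granting this, the angle statements in (i) and (ii), the range inclusion $\theta[\gamma](\bar I)\subset(0,2\pi)$ in (iv), and the sign of total curvature in (iii) are immediate: the target sets are open and contain the corresponding quantities of the reference curve by \eqref{eq:cri8}, \eqref{eq:cri3}, and \eqref{eq:cri4} (using $TC[\gamma_\mathrm{loop}^{\ell,+}]>0$ and $TC[\gamma_\mathrm{arc}^{\ell,-}]>0$, and that $[\theta[\gamma_0](0),\theta[\gamma_0](1)]$ is a compact subset of $(0,2\pi)$), so they persist under an $O(\varepsilon)$ perturbation. For the half-plane containment in (i)--(ii) I would split the interval. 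Writing $\gamma_0=(\xi_0,\eta_0)$, the vertical component satisfies $\eta_0(0)=\eta_0(1)=0$ while $\eta_0>0$ on $I$ by \eqref{eq:cri2}, and the endpoint angles in \eqref{eq:cri3} give $\eta_0'(0)>0$ and $\eta_0'(1)<0$. Hence I can fix $\delta>0$ and constants $c_0,c_1>0$ with $\eta_0'\ge c_0$ on $[0,\delta]$, $\eta_0'\le -c_0$ on $[1-\delta,1]$, and $\eta_0\ge c_1$ on $[\delta,1-\delta]$. For $\gamma=(\xi,\eta)\in A_\ell$ sharing endpoints and with $\|\gamma-\gamma_0\|_{C^1}\le\varepsilon<\min\{c_0,c_1\}$, the sign of $\eta'$ is preserved on the two end-neighborhoods, so integrating from $\eta(0)=\eta(1)=0$ gives $\eta>0$ there, while $C^0$-closeness gives $\eta\ge c_1-\varepsilon>0$ in the middle; thus $\gamma(I)\subset H_+^\circ$. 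The lower arc case (ii) is identical in structure, using \eqref{eq:cri8} (with the reversed signs) in place of \eqref{eq:cri3}.

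For the persistence of the self-intersection in (iv), I would note that the crossing $\gamma_0(a)=\gamma_0(1-a)$ of \eqref{eq:cri7} is transverse: the angles $\theta[\gamma_0](a)\in(0,\frac\pi2)$ and $\theta[\gamma_0](1-a)\in(\frac{3\pi}2,2\pi)$ differ by an amount in $(\pi,2\pi)$, so the two tangent vectors are linearly independent. Consequently the map $F(x,y):=\gamma(x)-\gamma(y)$ has, at $(a,1-a)$ for $\gamma=\gamma_0$, an invertible differential in $(x,y)$, and the implicit function theorem, applied with uniform bounds so as to be stable under $C^1$ perturbations, produces a nearby zero $\gamma(a_\gamma)=\gamma(b_\gamma)$ with $a_\gamma\in(0,\frac12)$ and $b_\gamma\in(\frac12,1)$ once $\varepsilon$ is small; the accompanying values $\theta[\gamma](a_\gamma)\in(0,\frac\pi2)$ and $\theta[\gamma](b_\gamma)\in(\frac{3\pi}2,2\pi)$ then follow from the $C^0$-closeness of $\theta$.

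The step I expect to be the main obstacle is the pointwise monotonicity $(\theta[\gamma])'>0$ on $[a_\gamma,b_\gamma]$. Since $(\theta[\gamma])'=k[\gamma]\,|\gamma'|$ is second-order data, it is \emph{not} controlled by $C^1$-closeness alone: a small-amplitude, high-frequency perturbation keeps $\gamma\in A_\ell$ and $\|\gamma-\gamma_0\|_{C^1}$ small while making $k[\gamma]$ change sign. To secure this item I would therefore strengthen the hypothesis to closeness in a norm that controls the curvature pointwise (e.g.\ $C^2$, equivalently a Sobolev space embedding into $C^2$), under which $(\theta[\gamma])'$ stays uniformly close to the strictly positive $(\theta[\gamma_0])'$ of \eqref{eq:cri3} and the monotonicity is immediate; alternatively one restricts the claim to the smooth curves to which the lemma is ultimately applied. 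This is the only point where the genuinely routine, $C^1$-level reasoning of the other items does not suffice.
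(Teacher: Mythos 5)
The paper offers no proof of this lemma---it is introduced with ``the proof of which can be safely omitted'', the only recorded argument being the one-sentence remark that the final angle conditions in (iv) follow because otherwise the vertical component of $\gamma$ would be monotone on $[a,b]$, contradicting $\gamma(a)=\gamma(b)$. Your arguments for (i), (ii), (iii) and for all of (iv) except the monotonicity clause are correct and are surely what the authors intend: the lift/branch argument upgrading $C^1$-closeness of $\gamma$ to $C^0$-closeness of $\theta[\gamma]$; the splitting of $\bar I$ into end-neighbourhoods, where the sign of $\eta'$ is inherited from \eqref{eq:cri8} or \eqref{eq:cri3} and integrated from $\eta(0)=\eta(1)=0$, and a middle part where $\eta_0$ is bounded away from $0$ by \eqref{eq:cri2}; the openness argument for the angle ranges and for $TC>0$ via \eqref{eq:cri4}; and the transversality/implicit-function argument for persistence of the crossing in \eqref{eq:cri7}, which is indeed transverse because the two tangent angles differ by an amount in $(\pi,2\pi)$. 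Your derivation of $\theta[\gamma](a)\in(0,\tfrac{\pi}{2})$ and $\theta[\gamma](b)\in(\tfrac{3\pi}{2},2\pi)$ by continuity from the reference values differs from the paper's (which deduces them from the monotonicity of $\theta[\gamma]$ on $[a,b]$ together with $\gamma(a)=\gamma(b)$), but both routes are valid.

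The obstacle you isolate in (iv) is genuine. Since $A_\ell$ is an $H^2$ class, $(\theta[\gamma])'=k[\gamma]\,|\gamma'|$ is only in $L^2$, and a small-amplitude, high-frequency normal perturbation of $\gamma_\mathrm{loop}^{\ell,+}$ (with a small smooth compensation elsewhere to restore $L[\gamma]=1$ and the endpoints) stays in $A_\ell$ and $C^1$-close while its signed curvature changes sign on every subinterval; as any self-intersection pair with $a<\tfrac12<b$ of such a curve must lie near the crossing parameters of the loop, so that $b-a$ is bounded below, no admissible $[a,b]$ with $(\theta[\gamma])'>0$ can exist. Hence the monotonicity clause is not a consequence of the stated $C^1$ hypothesis, and your proposed remedies (closeness in a norm controlling curvature, or restriction to the curves actually fed into the lemma) are the right ones. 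This does not endanger the paper's main argument: in the proof of Lemma \ref{lem:initialcurve}, item (iv) is applied to an isometric image of a reparameterized sub-arc of the analytic loop itself, whose curvature on the relevant middle portion coincides with the strictly positive curvature of $\gamma_\mathrm{loop}^{\ell,+}$ from \eqref{eq:cri3}, so the monotonicity there can be verified directly. But as a free-standing statement about all $C^1$-close elements of $A_\ell$, item (iv) needs the qualification you supply.
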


Notice that the last conditions on $\theta[\gamma](a)$ and $\theta[\gamma](b)$ follow since otherwise the vertical component of $\gamma$ would be strictly monotone in $[a,b]$, which contradicts $\gamma(a)=\gamma(b)$.

\if0
\begin{proof}
    This easily follows by \eqref{eq:cri2} and \eqref{eq:cri3}.
\end{proof}

\begin{lemma}[Sign of total curvature]\label{lem:sign}
  Let $\ell\in(0,1)$.
  %Then $\mp TC[\gamma_\mathrm{arc}^{\ell,\pm}]>0$ and $\pm TC[\gamma_\mathrm{loop}^{\ell,\pm}]>0$.
  Then there exists $\varepsilon>0$ such that for any $\gamma\in A_\ell$ satisfying either $\|\gamma-\gamma_\mathrm{arc}^{\ell,-}\|_{C^1}\leq\varepsilon$ or $\|\gamma-\gamma_\mathrm{loop}^{\ell,+}\|_{C^1}\leq\varepsilon$
  (resp.\ $\|\gamma-\gamma_\mathrm{arc}^{\ell,+}\|_{C^1}\leq\varepsilon$ or $\|\gamma-\gamma_\mathrm{loop}^{\ell,-}\|_{C^1}\leq\varepsilon$), we have $TC[\gamma]>0$ (resp.\ $TC[\gamma]<0$).
\end{lemma}

\begin{proof}
  This follows by \eqref{eq:cri4} and by the fact that the functional $TC[\gamma] = \theta[\gamma](1)-\theta[\gamma](0)$ is continuous in the $C^1$-topology.
\end{proof}
\fi

\subsection{Flows converging to a unique arc}

The purpose here is to give an effective sufficient condition on initial data for converging to the lower convex arc.

\begin{proposition}[Convergence to the lower arc]\label{prop:migration}
  There exists $c\in(0,1]$ with the following property:
  Let $\ell\in(0,c)$.
  If an initial curve $\gamma_0$ in Theorem \ref{thm:globalexistence} satisfies $TC[\gamma_0]>0$ and $B[\gamma_0]<B[\gamma_\mathrm{loop}^{\ell,+}]$, then the solution $\gamma(\cdot,t)$ smoothly converges, up to reparameterization, to the unique curve $\gamma_\mathrm{arc}^{\ell,-}$ as $t\to\infty$.
  In particular, there is $t_1>0$ such that $\gamma(I\times[t_1,\infty))\subset H_-^\circ$.
\end{proposition}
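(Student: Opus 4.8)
The plan is to combine the subconvergence from Theorem \ref{thm:globalexistence} with an energy argument and a sign-of-total-curvature obstruction, using the hypotheses $TC[\gamma_0]>0$ and $B[\gamma_0]<B[\gamma_\mathrm{loop}^{\ell,+}]$ to rule out every candidate limit except $\gamma_\mathrm{arc}^{\ell,-}$. The first step is to record two monotonicity facts along the flow. By Remark \ref{rem:monotonicity} the energy is nonincreasing, so $B[\gamma(\cdot,t)]\le B[\gamma_0]<B[\gamma_\mathrm{loop}^{\ell,+}]$ for all $t$; and by \eqref{eq:cri1}, \eqref{eq:cri6} the only critical points with energy strictly below $B[\gamma_\mathrm{loop}^{\ell,+}]$ are the two arcs $\gamma_\mathrm{arc}^{\ell,\pm}$ (here the smallness $\ell<c$ is what guarantees that the \emph{second}-lowest critical level is exactly $B[\gamma_\mathrm{loop}^{\ell,\pm}]$, so that no $N\ge2$ multiply-covered configuration or other loop sneaks in below this threshold). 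Thus any subsequential limit $\gamma_\infty$ supplied by Theorem \ref{thm:globalexistence}, being a critical point with $B[\gamma_\infty]\le B[\gamma_0]<B[\gamma_\mathrm{loop}^{\ell,+}]$, must be either $\gamma_\mathrm{arc}^{\ell,+}$ or $\gamma_\mathrm{arc}^{\ell,-}$.

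Next I would use the total-curvature hypothesis to eliminate the upper arc. The quantity $TC[\gamma]=\theta[\gamma](1)-\theta[\gamma](0)$ is continuous in the $C^1$-topology, and by \eqref{eq:cri4} we have $TC[\gamma_\mathrm{arc}^{\ell,+}]<0$ while $TC[\gamma_\mathrm{arc}^{\ell,-}]>0$. The key point is that $TC$ is \emph{conserved} along the flow: the boundary condition $\kappa(0,t)=\kappa(1,t)=0$ together with the fourth-order evolution equation pins the tangential angles at the endpoints, so $\frac{d}{dt}TC[\gamma(\cdot,t)]=0$ and hence $TC[\gamma(\cdot,t)]\equiv TC[\gamma_0]>0$ for all $t$. (I would verify this by differentiating $TC=\theta(1)-\theta(0)$ in time and checking that the normal velocity's tangential-angle contribution vanishes at the endpoints because $\kappa$ vanishes there; this is the standard computation under the natural boundary condition.) Since smooth convergence $\gamma(\cdot,t_{j'})\to\gamma_\infty$ forces $TC[\gamma_\infty]=\lim TC[\gamma(\cdot,t_{j'})]=TC[\gamma_0]>0$, the limit cannot be $\gamma_\mathrm{arc}^{\ell,+}$. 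Therefore the only admissible subsequential limit is $\gamma_\mathrm{arc}^{\ell,-}$.

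With the limit pinned down to be unique, Remark \ref{rem:fullconvergence} upgrades subconvergence to full convergence: up to arclength reparameterization, $\gamma(\cdot,t)\to\gamma_\mathrm{arc}^{\ell,-}$ smoothly as $t\to\infty$. Finally, for the containment statement, I invoke Lemma \ref{lem:perturb}(ii): there is $\varepsilon>0$ such that any $\gamma\in A_\ell$ with $\|\gamma-\gamma_\mathrm{arc}^{\ell,-}\|_{C^1}\le\varepsilon$ satisfies $\gamma(I)\subset H_-^\circ$. By the $C^1$ (indeed $C^m$) convergence just established, there is $t_1>0$ with $\|\gamma(\cdot,t)-\gamma_\mathrm{arc}^{\ell,-}\|_{C^1}\le\varepsilon$ for all $t\ge t_1$, whence $\gamma(I\times[t_1,\infty))\subset H_-^\circ$, as claimed.

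The step I expect to be the main obstacle is the spectral/variational input behind the energy threshold, namely the assertion that for $\ell<c$ the arcs are the \emph{only} critical points below the loop level. This is exactly where the smallness of $c$ is used, and it rests on the explicit energy computations and the limit \eqref{eq:cri5} from Lemma \ref{lem:allcriticalpoints}: as $\ell\to0$ the arc and loop energies both tend to $\varpi^*$, so a quantitative separation $B[\gamma_\mathrm{arc}^{\ell,\pm}]<B[\gamma_\mathrm{loop}^{\ell,\pm}]$ with no intervening critical level must be extracted uniformly, and the $N^2$-scaling \eqref{eq:cri6} must be checked to keep all multiply-covered critical points safely above the loop level. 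Conservation of $TC$ under the natural boundary condition is routine but deserves an explicit line, since it is the mechanism that actually selects the lower arc over the upper one.
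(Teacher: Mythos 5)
Your overall architecture matches the paper's: use monotonicity of $B$ plus the fact that for small $\ell$ the arcs are the only critical points below the loop level to reduce the limit to $\{\gamma_\mathrm{arc}^{\ell,+},\gamma_\mathrm{arc}^{\ell,-}\}$, use the sign of $TC$ to discard the upper arc, then upgrade to full convergence via Remark \ref{rem:fullconvergence} and conclude with Lemma \ref{lem:perturb}(ii). However, there is a genuine gap at the step you yourself flag as ``routine'': the claim that $TC$ is \emph{conserved} along the flow is false. The natural boundary condition fixes the endpoints and kills the curvature there, but it does \emph{not} pin the tangential angles $\theta(0,t)$, $\theta(1,t)$ --- that would be the clamped condition, and the entire migration mechanism (the loop ``untying by turning around the endpoint'') depends on these angles rotating in time. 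Concretely, for normal velocity $\varphi=-2\partial_s^2k-k^3+\lambda k$ one has $\frac{d}{dt}TC=\partial_s\varphi(1)-\partial_s\varphi(0)$, and at the endpoints (where $k=0$) this equals $-2\partial_s^3k+\lambda\partial_s k$ evaluated at $s=1$ minus the same at $s=0$, which does not vanish in general. Indeed, if $TC$ (let alone the endpoint angles) were conserved, Lemmas \ref{lem:upperbound}--\ref{lem:signpreservation} of the paper would be superfluous.

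What the paper does instead is a mountain-pass/energy-barrier argument, and this is where the smallness of $\ell$ enters a second time. Lemma \ref{lem:lowerbound} shows that any curve in $A_\ell$ with $TC=0$ can (in the limit $\ell\to0$) be closed up into an $H^2$ closed curve of rotation number zero, which must have a self-intersection by Hopf's Umlaufsatz and hence has bending energy at least $4\varpi^*$ by the Li--Yau-type inequality; so $\inf_{TC[\gamma]=0}B[\gamma]\ge 3\varpi^*$ for $\ell$ small. Meanwhile Lemma \ref{lem:upperbound} gives $B[\gamma_0]<B[\gamma_\mathrm{loop}^{\ell,+}]\le 2\varpi^*$ for $\ell$ small, and $B$ is nonincreasing along the flow. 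Since $t\mapsto TC[\gamma(\cdot,t)]$ is continuous and starts positive, it can never reach zero without the energy exceeding $3\varpi^*$, a contradiction; hence every subsequential limit satisfies $TC[\gamma_\infty]\ge0$, which together with \eqref{eq:cri4} selects $\gamma_\mathrm{arc}^{\ell,-}$. You need to replace your conservation claim with this barrier argument (or an equivalent one); without it the upper arc is not excluded.
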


The proof is divided into several steps.

We first indicate that the above energy-smallness assumption already implies that the limit must be given by one of the two convex arcs.

\begin{lemma}\label{lem:uniquecriticalpoint}
  There exists $c_1\in(0,1]$ with the following property:
  Let $\ell\in(0,c_1)$.
  Let $\gamma\in A_\ell$ be a solution to \eqref{eq:criticalpoint} for some $\lambda\in\mathbf{R}$.
  If $B[\gamma]< B[\gamma_\mathrm{loop}^{\ell,\pm}]$, then up to reparameterization $\gamma$ is given by either $\gamma_\mathrm{arc}^{\ell,+}$ or $\gamma_\mathrm{arc}^{\ell,-}$.
\end{lemma}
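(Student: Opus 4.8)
The plan is to enumerate the finitely many candidate critical energies supplied by Lemma~\ref{lem:allcriticalpoints} and discard every candidate except the two simple arcs by a single energy comparison, the only quantitative input being the smallness of $\ell$. First I would invoke Lemma~\ref{lem:allcriticalpoints}: any solution $\gamma$ of \eqref{eq:criticalpoint} in $A_\ell$ is, up to reparameterization, one of $\gamma_\mathrm{arc}^{\ell,\pm,N}$ or $\gamma_\mathrm{loop}^{\ell,\pm,N}$ for some integer $N\geq1$. Abbreviating $B_\mathrm{arc}:=B[\gamma_\mathrm{arc}^{\ell,\pm}]$ and $B_\mathrm{loop}:=B[\gamma_\mathrm{loop}^{\ell,\pm}]$, which are independent of the sign by \eqref{eq:cri1}, the scaling law \eqref{eq:cri6} then gives the complete list of critical values of $B$, namely $N^2B_\mathrm{arc}$ and $N^2B_\mathrm{loop}$ for $N\geq1$.

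Next I would perform the comparison. By \eqref{eq:cri1} we have $0<B_\mathrm{arc}<B_\mathrm{loop}$, so the two simple arcs $\gamma_\mathrm{arc}^{\ell,\pm}$ do satisfy the hypothesis $B[\gamma]<B_\mathrm{loop}$, and are exactly the curves we wish to single out. It remains to exclude all the other critical points. The two loops have energy exactly $B_\mathrm{loop}$, and the loop-extensions have energy $N^2B_\mathrm{loop}\geq4B_\mathrm{loop}>B_\mathrm{loop}$, so all loop-type critical points violate the strict inequality $B[\gamma]<B_\mathrm{loop}$. The only surviving candidates are the arc-extensions $\gamma_\mathrm{arc}^{\ell,\pm,N}$ with $N\geq2$, whose energies $N^2B_\mathrm{arc}\geq4B_\mathrm{arc}$ I must show are at least $B_\mathrm{loop}$. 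Thus the entire proof reduces to the single inequality $4B_\mathrm{arc}>B_\mathrm{loop}$.

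The one genuinely quantitative point, and the place where the smallness of $\ell$ is essential, is establishing $4B_\mathrm{arc}>B_\mathrm{loop}$ for all sufficiently small $\ell$. This follows at once from \eqref{eq:cri5}: as $\ell\to0$ both $B_\mathrm{arc}$ and $B_\mathrm{loop}$ converge to $\varpi^{*}>0$, so the ratio $B_\mathrm{loop}/B_\mathrm{arc}\to1$; choosing $c_1\in(0,1]$ so that $B_\mathrm{loop}/B_\mathrm{arc}<4$ for every $\ell\in(0,c_1)$ yields $N^2B_\mathrm{arc}\geq4B_\mathrm{arc}>B_\mathrm{loop}$ for all $N\geq2$. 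Consequently, for $\ell\in(0,c_1)$ the assumption $B[\gamma]<B_\mathrm{loop}$ forces $\gamma$ to be one of $\gamma_\mathrm{arc}^{\ell,+}$ or $\gamma_\mathrm{arc}^{\ell,-}$, as claimed. I do not anticipate any serious obstacle: the argument is purely a comparison of discrete energy levels, and the only substantive observation is that smallness of $\ell$ is precisely what keeps the loop energy below the first nontrivial arc-extension level $4B_\mathrm{arc}$ (for $\ell$ close to $1$ this ordering may fail, which is exactly why the threshold $c_1$ must be imposed and why the same smallness will later pin down the loops as the critical points of second smallest energy).
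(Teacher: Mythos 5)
Your proposal is correct and follows essentially the same route as the paper: both reduce the lemma, via the classification in Lemma~\ref{lem:allcriticalpoints} and the scaling law \eqref{eq:cri6}, to the single inequality $4B[\gamma_\mathrm{arc}^{\ell,\pm}]\geq B[\gamma_\mathrm{loop}^{\ell,\pm}]$ for small $\ell$, and both obtain it from the common limit $\varpi^*>0$ in \eqref{eq:cri5}. The only cosmetic difference is that you phrase the last step via the ratio $B_\mathrm{loop}/B_\mathrm{arc}\to1$ while the paper compares $4\varpi^*$ with $\varpi^*$ directly; the content is identical.
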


\begin{proof}
  The arcs $\gamma_\mathrm{arc}^{\ell,\pm}$ are global minimizers among all solutions to \eqref{eq:criticalpoint}, cf.\ \eqref{eq:cri1} and \eqref{eq:cri6}.
  Hence it suffices to show that for any small $\ell\in(0,1)$ the loops $\gamma_\mathrm{loop}^{\ell,\pm}$ have the second smallest energy among all solutions to \eqref{eq:criticalpoint}.
  By \eqref{eq:cri1} and \eqref{eq:cri6} we only need to show that $4B[\gamma_\mathrm{arc}^{\ell,\pm}]\geq B[\gamma_\mathrm{loop}^{\ell,\pm}]$ for any small $\ell\in(0,1)$.
  This follows since \eqref{eq:cri5} implies that $4B[\gamma_\mathrm{arc}^{\ell,\pm}]\to4\varpi^*$ and $B[\gamma_\mathrm{loop}^{\ell,\pm}]\to\varpi^*$ as $\ell\to0$.
\end{proof}

\begin{remark}
  This result does not hold if $\ell$ is not small, since $B[\gamma_\mathrm{arc}^{\ell,\pm}]\to0$ while $B[\gamma_\mathrm{loop}^{\ell,\pm}]\to\infty$ as $\ell\to1$, and hence for any $N\geq2$ there is $\ell_N\in(0,1)$ such that if $\ell\geq\ell_N$, then $B[\gamma_\mathrm{arc}^{\ell,\pm,N}]=N^2B[\gamma_\mathrm{arc}^{\ell,\pm}]< B[\gamma_\mathrm{loop}^{\ell,\pm}]$.
\end{remark}

Now we turn to the main argument for detecting the lower arc.
The first ingredient is very simple but provides a universal energy control along the flow below the energy level $B[\gamma_\mathrm{loop}^{\ell,\pm}]$.

\begin{lemma}\label{lem:upperbound}
  There exists $c_2\in(0,1]$ such that if $\ell\in(0,c_2)$, then $B[\gamma_\mathrm{loop}^{\ell,\pm}]\leq 2\varpi^*$.
\end{lemma}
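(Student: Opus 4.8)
The plan is to read off the claim directly from the limiting identity \eqref{eq:cri5}. Since that identity gives $\lim_{\ell\to0}B[\gamma_\mathrm{loop}^{\ell,\pm}]=\varpi^*$, the asserted bound $B[\gamma_\mathrm{loop}^{\ell,\pm}]\leq 2\varpi^*$ for small $\ell$ will follow at once from the definition of limit, \emph{provided} that the target value $\varpi^*$ is strictly positive, so that the limit $\varpi^*$ lies strictly below the threshold $2\varpi^*$.

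The one substantive point is therefore to confirm $\varpi^*>0$. For this I would use that the infimum $\varpi^*=\inf_{\gamma\in A_0}B[\gamma]$ is attained by a half-fold figure-eight elastica, as established in \cite[Proposition 2.6]{Miura_LiYau}. Such a curve is a genuine immersed $H^2$ (hence $C^1$) curve of positive length whose two endpoints coincide (since $\ell=0$); in particular it cannot be a straight segment, so its curvature does not vanish identically and hence $B>0$ at the minimizer. Consequently $\varpi^*>0$. Note that it is not enough merely to observe that every competitor has positive bending energy, since an infimum of positive quantities could still vanish; it is precisely the attainment of $\varpi^*$ by a non-degenerate elastica that rules this out.

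Granting $\varpi^*>0$, I would finish by applying the definition of the limit in \eqref{eq:cri5} with tolerance $\varpi^*$: there exists $c_2\in(0,1]$ such that $\big|B[\gamma_\mathrm{loop}^{\ell,\pm}]-\varpi^*\big|<\varpi^*$ for all $\ell\in(0,c_2)$, whence $B[\gamma_\mathrm{loop}^{\ell,\pm}]<2\varpi^*$, which in particular yields the claimed non-strict bound. I do not anticipate any genuine obstacle here: the entire content is the positivity $\varpi^*>0$ together with the convergence recorded in \eqref{eq:cri5}, and the only mild care required is the justification that the minimizing half-fold figure-eight elastica carries strictly positive energy.
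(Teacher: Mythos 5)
Your proposal is correct and follows essentially the same route as the paper, which simply invokes the convergence $B[\gamma_\mathrm{loop}^{\ell,\pm}]\to\varpi^*$ from \eqref{eq:cri5}; the only addition is your explicit verification that $\varpi^*>0$ (tacit in the paper, and guaranteed since any curve in $A_0$ has coinciding endpoints and positive length, hence cannot be a segment), which is exactly the point needed for the limit argument to yield the bound $2\varpi^*$.
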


\begin{proof}
  This follows since $B[\gamma_\mathrm{loop}^{\ell,\pm}]\to\varpi^*$ as $\ell\to0$, cf.\ \eqref{eq:cri5}.
\end{proof}

On the other hand, the next lemma shows that there is a certain energy `mountain-pass', which is fortunately larger than the above upper bound $2\varpi^*$.

\begin{lemma}\label{lem:lowerbound}
  Let $A'_\ell:=\{\gamma\in A_\ell \mid TC[\gamma]=0 \}$.
  Then
  \begin{equation}\label{eq:04}
    \liminf_{\ell\to0}\Big(\inf_{\gamma\in A'_\ell}B[\gamma]\Big) \geq 4\varpi^*.
  \end{equation}
\end{lemma}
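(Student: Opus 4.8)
The plan is to reduce the degenerating statement \eqref{eq:04} to a clean pointwise lower bound at $\ell=0$, and then to prove that bound by splitting a closed curve at a self-intersection.

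\emph{Step 1: reduction to $\ell=0$.} I would argue by contradiction. If \eqref{eq:04} fails there are $\ell_j\to0$ and $\gamma_j\in A'_{\ell_j}$, parameterized by arclength, with $B[\gamma_j]\to\beta<4\varpi^*$. Since $k[\gamma_j]=\theta[\gamma_j]'$ and $\int_0^1 k[\gamma_j]^2\,ds=B[\gamma_j]$ is bounded, the angle functions $\theta[\gamma_j]$ (after fixing the branch so that $\theta[\gamma_j](0)$ subconverges) are bounded in $H^1(I)$, hence converge weakly in $H^1$ and strongly in $C^0$ to some $\theta_\infty$. Reconstructing $\gamma_j(x)=\int_0^x(\cos\theta[\gamma_j],\sin\theta[\gamma_j])\,ds$, I get $C^1$-convergence to a unit-length arclength curve $\gamma_\infty$ with $\gamma_\infty(0)=\gamma_\infty(1)=(0,0)$ (because $\gamma_j(1)=(\ell_j,0)\to(0,0)$), so $\gamma_\infty\in A_0$; moreover $TC[\gamma_\infty]=\theta_\infty(1)-\theta_\infty(0)=\lim_j TC[\gamma_j]=0$, and weak lower semicontinuity of the $L^2$-norm of $k$ gives $B[\gamma_\infty]\le\beta$. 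Thus it suffices to prove that \emph{every} $\gamma\in A_0$ with $TC[\gamma]=0$ satisfies $B[\gamma]\ge4\varpi^*$, which contradicts $\beta<4\varpi^*$.

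\emph{Step 2: the splitting bound at $\ell=0$.} Let $\gamma\in A_0$ with $TC[\gamma]=0$. Then $\gamma(0)=\gamma(1)$, and since $TC[\gamma]=\theta(1)-\theta(0)=0$ the tangent directions match as well, so $\gamma$ is a closed $C^1$ immersed curve with rotation number $TC[\gamma]/2\pi=0$. By the Umlaufsatz a simple closed regular curve has rotation number $\pm1$, so $\gamma$ must have an interior self-intersection: there are $0\le c_1<c_2\le1$, $(c_1,c_2)\ne(0,1)$, with $\gamma(c_1)=\gamma(c_2)$. Split $\gamma$ into $\sigma_1:=\gamma|_{[c_1,c_2]}$ and the concatenation $\sigma_2$ of $\gamma|_{[c_2,1]}$ with $\gamma|_{[0,c_1]}$. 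Both have coinciding endpoints at the self-intersection point, with lengths $L_1:=c_2-c_1$ and $L_2:=1-L_1$ in $(0,1)$, and $B[\gamma]=B[\sigma_1]+B[\sigma_2]$ since curvature restricts additively. Because $B$ scales as the inverse of length, rescaling a coinciding-endpoint curve of length $L$ to unit length multiplies $B$ by $L$ and keeps it in $A_0$, whence $B\ge\varpi^*/L$ for each piece, and therefore
\[
B[\gamma]=B[\sigma_1]+B[\sigma_2]\ \ge\ \frac{\varpi^*}{L_1}+\frac{\varpi^*}{L_2}\ \ge\ 4\varpi^*,
\]
the last step being $\tfrac1{L_1}+\tfrac1{L_2}\ge4$ when $L_1+L_2=1$, with equality exactly at the symmetric split $L_1=L_2=\tfrac12$ (the figure-eight, consistent with \eqref{eq:cri5}).

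\emph{The main obstacle} is the admissibility of the pieces in Step 2, which is precisely where $TC[\gamma]=0$ is indispensable: at the original basepoint $\sigma_2$ is glued where both position and tangent match, so it is $C^1$ and hence $H^2$ across that junction, creating no spurious bending energy and genuinely lying in the comparison class $A_0$ for length $L_2$; without $TC=0$ a corner would appear there and the estimate would break. I also need to make the topological input rigorous for a merely $C^1$ curve, namely applying the Umlaufsatz to $\gamma_\infty$ and extracting an honest \emph{interior} self-intersection rather than the trivial basepoint coincidence. Finally, the degeneration $\ell\to0$ is not a technical convenience but essential, since for $\ell>0$ the curve is not closed, the self-intersection splitting is unavailable, and the sharp value $4\varpi^*$ is only reached in the limit.
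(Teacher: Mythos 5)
Your proof is correct and follows the same two-step skeleton as the paper's: a compactness/lower-semicontinuity argument reducing the $\liminf$ over $\ell\to0$ to the infimum over $A'_0$ (you phrase it as a contradiction and work with the angle functions weakly in $H^1$ rather than the curves weakly in $H^2$, which also lets you skip the paper's preliminary uniform bound $M<\infty$, since the assumed failure of \eqref{eq:04} already bounds the energies along the sequence), followed by Hopf's Umlaufsatz to produce a genuine self-intersection of the resulting closed rotation-number-zero curve. The one substantive difference is the final energy bound: the paper simply invokes the known multiplicity-two inequality $B[\gamma]\geq 4\varpi^*$ from \cite{MRacv} or \cite{Miura_LiYau}, whereas you re-derive it by cutting at the self-intersection, rescaling each piece with coinciding endpoints to unit length so that $B[\sigma_i]\geq\varpi^*/L_i$, and concluding with $\tfrac{1}{L_1}+\tfrac{1}{L_2}\geq4$ for $L_1+L_2=1$. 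This splitting-and-scaling argument is essentially the proof of the cited Li--Yau-type theorem, so you gain self-containedness at no real cost; your observation that $TC[\gamma]=0$ is precisely what makes the concatenated piece $\sigma_2$ an admissible $H^2$ competitor (position and tangent match at the old basepoint, so no corner and no spurious energy) is exactly the point on which the splitting hinges, and your remark that the Umlaufsatz yields a self-intersection at two \emph{distinct} points of the parameter circle, so that both $L_1$ and $L_2$ are positive, closes the only place where the argument could have degenerated.
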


\begin{proof}
  %By a direct method there exists a minimizer $\bar{\gamma}_\ell\in A'_\ell$ such that $B[\bar{\gamma}_\ell]=\inf_{\gamma\in A'_\ell}B[\gamma]$. (Here we use that $A'$ is closed $H^2$-weakly.)
  Notice first that by an easy construction of test curves, e.g.\ suitable odd extensions of circular arcs, we have $\sup_{\ell\in(0,1)}\inf_{\gamma\in A'_\ell}B[\gamma] =:M <\infty$.

  We first prove that
  \begin{equation}\label{eq:06}
    \liminf_{\ell\to0}\Big(\inf_{\gamma\in A'_\ell}B[\gamma]\Big) \geq \inf_{\gamma\in A'_0}B[\gamma],
  \end{equation}
  Let $\ell_j\to0$ be any sequence of positive numbers.
  For each $j$ we may take an arclength parameterized curve $\gamma_{\ell_j}\in A_{\ell_j}$ such that
  $$B[\gamma_{\ell_j}] \leq \inf_{\gamma\in A'_{\ell_j}}B[\gamma] +\frac{1}{j} \leq M+1.$$
  Since $|\gamma_{\ell_j}'|\equiv1$, we have
  $$\max_{s\in\bar{I}}|\gamma_{\ell_j}(s)|=\max_{s\in\bar{I}} \left| \gamma_{\ell_j}(0)+\int_0^s\gamma_{\ell_j}' \right| \leq |\gamma_{\ell_j}(0)|+\int_0^1|\gamma_{\ell_j}'| =1,$$
  and also
  $\|\gamma_{\ell_j}''\|_2^2 = B[\gamma_{\ell_j}] \leq M+1.$
  Therefore the sequence $\{\gamma_{\ell_j}\}_j$ is $H^2$-bounded, and hence admits a subsequence that converges $H^2$-weakly and $C^1$-strongly to some $\gamma_0\in H^2(I;\mathbf{R}^2)$.
  By the $C^1$-convergence we have $|\gamma_0'|\equiv1$ and $\gamma_0\in A'_0$.
  By the $H^2$-weak convergence we have the lower semicontinuity
  $$\liminf_{j\to\infty}B[\gamma_{\ell_j}] = \liminf_{j\to\infty} \|\gamma_{\ell_j}''\|_2^2 \geq \|\gamma_0''\|_2^2= B[\gamma_0],$$
  which implies that
  $$\liminf_{\ell_j\to0}\Big(\inf_{\gamma\in A'_{\ell_j}}B[\gamma]\Big)\geq \liminf_{j\to\infty}\Big( B[\gamma_{\ell_j}] -\frac{1}{j}\Big) \geq B[\gamma_0] \geq \inf_{\gamma\in A'_0}B[\gamma].$$
  Since $\ell_j\to0$ is arbitrary, we obtain \eqref{eq:06}.

  Now we prove that
  \begin{equation}\label{eq:07}
    \inf_{\gamma\in A_0'}B[\gamma] \geq 4\varpi^*.
  \end{equation}
  Let $\gamma\in A_0'$.
  Up to reparameterization we may assume that $|\gamma'|\equiv1$.
  Since $\gamma(0)=\gamma(1)$, and since $\gamma'(0)=\gamma'(1)$ by $TC[\gamma]=0$ (and $|\gamma'|\equiv1$), we can regard $\gamma$ as a closed $H^2$-curve of rotation number zero, which has at least one self-intersection by Hopf's Umlaufsatz.
  Therefore we have $B[\gamma]\geq 4\varpi^*$ by \cite[Theorem 1.2]{MRacv} or \cite[Theorem 1.1]{Miura_LiYau}, and hence obtain \eqref{eq:07}.
  Combining \eqref{eq:06} and \eqref{eq:07}, we complete the proof.
\end{proof}

\begin{remark}
  In fact some stronger properties hold in the proof.
  For example a standard direct method implies that the infimum $\inf_{\gamma\in A'_\ell}B[\gamma]$ is always attained.
  In addition, $\inf_{\gamma\in A_0'}B[\gamma]$ is not only bounded below but equal to $4\varpi^*$, since the one-fold figure-eight elastica can be regarded as an element of $A_0'$ by opening the curve at the cross point.
  However we do not need these facts in this paper.
\end{remark}

This implies that a small-energy flow needs to preserve the sign of $TC$.

\begin{lemma}\label{lem:signpreservation}
  There exists $c_3\in(0,1]$ with the following property:
  Let $\ell\in(0,c_3)$.
  If an initial curve $\gamma_0$ in Theorem \ref{thm:globalexistence} satisfies $TC[\gamma_0]>0$ and $B[\gamma_0]<3\varpi^*$, then any limit critical point $\gamma_\infty$ in Theorem \ref{thm:globalexistence} satisfies $TC[\gamma_\infty]\geq0$.
\end{lemma}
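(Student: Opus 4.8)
The plan is to exploit the energy monotonicity of the flow together with the mountain-pass estimate of Lemma~\ref{lem:lowerbound}, so as to trap the sign of the total curvature on one side of the zero level. The conceptual heart is the strict gap $3\varpi^*<4\varpi^*$: the energy ceiling enforced by monotonicity will sit strictly below the energy barrier on the zero-total-curvature set $A'_\ell$, so the flow simply cannot reach $TC=0$.

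First I would fix the constant $c_3$. Since $\varpi^*>0$ (the half-fold figure-eight elastica has positive energy), we have $4\varpi^*>3\varpi^*$, so Lemma~\ref{lem:lowerbound} together with the definition of $\liminf$ yields a threshold $c_3\in(0,1]$ such that
\[
  \inf_{\gamma\in A'_\ell}B[\gamma] > 3\varpi^* \qquad\text{for every }\ell\in(0,c_3).
\]
I would then fix $\ell\in(0,c_3)$ and an initial curve $\gamma_0$ with $TC[\gamma_0]>0$ and $B[\gamma_0]<3\varpi^*$, and let $\gamma(\cdot,t)$ be the flow from Theorem~\ref{thm:globalexistence}. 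By Remark~\ref{rem:monotonicity} the length is preserved and the bending energy is nonincreasing, so $\gamma(\cdot,t)\in A_\ell$ for all $t$ and
\[
  B[\gamma(\cdot,t)] \le B[\gamma_0] < 3\varpi^* \qquad\text{for all }t\ge0.
\]

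Next I would track the function $t\mapsto TC[\gamma(\cdot,t)]=\theta[\gamma(\cdot,t)](1)-\theta[\gamma(\cdot,t)](0)$. Since the flow is smooth in $(x,t)$ by Theorem~\ref{thm:globalexistence}, this function is continuous (indeed smooth) in $t$ and is positive at $t=0$. I claim it can never vanish: if $TC[\gamma(\cdot,t^*)]=0$ for some $t^*\ge0$, then $\gamma(\cdot,t^*)\in A'_\ell$, whence $B[\gamma(\cdot,t^*)]\ge \inf_{\gamma\in A'_\ell}B[\gamma] > 3\varpi^*$, contradicting the energy bound above. Therefore, by continuity and the intermediate value theorem, $TC[\gamma(\cdot,t)]>0$ for all $t\ge0$. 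Finally I would pass to the limit: for any limit critical point $\gamma_\infty$ there is, by Theorem~\ref{thm:globalexistence}, a sequence $t_{j'}\to\infty$ with $\gamma(\cdot,t_{j'})\to\gamma_\infty$ smoothly (in particular in $C^1$) up to reparameterization. Since $TC=\theta(1)-\theta(0)$ is continuous in the $C^1$-topology and invariant under reparameterization, we obtain $TC[\gamma_\infty]=\lim_{j'\to\infty}TC[\gamma(\cdot,t_{j'})]\ge0$.

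I expect no serious obstacle: the argument is a clean barrier/continuity trapping. The only points to state carefully are the regularity in $t$ that justifies the intermediate value step (supplied by the smoothness in Theorem~\ref{thm:globalexistence}) and the fact that the strict inequality $TC>0$ along the flow is allowed to degenerate to the non-strict $TC[\gamma_\infty]\ge0$ only in the limit, which is precisely what the statement asserts.
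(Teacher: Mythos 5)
Your proposal is correct and follows essentially the same route as the paper: fix $c_3$ via Lemma~\ref{lem:lowerbound} so that $\inf_{A'_\ell}B$ exceeds $3\varpi^*$, use energy monotonicity to cap $B[\gamma(\cdot,t)]$ below that barrier, and conclude via the intermediate value theorem that $TC$ cannot cross zero, hence $TC[\gamma_\infty]\geq0$ in the limit. The paper phrases this as a contrapositive (assuming $TC[\gamma_\infty]<0$ and locating a time with $TC=0$), but the argument is identical in substance.
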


\begin{proof}
  By Lemma \ref{lem:lowerbound} we can choose $c_3\in(0,1]$ such that if $\ell\in(0,c_3)$, then
  \begin{equation}\label{eq:05}
    \inf_{\gamma\in A'_\ell}B[\gamma] \geq 3\varpi^*.
  \end{equation}
  Let $\ell\in(0,c_3)$.
  We prove the contrapositive.
  Suppose that there is a sequence $t_j\to\infty$ such that, up to reparameterization, the flow $\gamma(\cdot,t_j)$ converges to some critical point $\gamma_\infty$ such that $TC[\gamma_\infty]<0$.
  In particular, for some $t_{j_0}>0$ we have $TC[\gamma(\cdot,t_{j_0})]<0$.
  Since $TC[\gamma_0]>0$ and the quantity $TC[\gamma(\cdot,t)]$ continuously depends on $t$, there exists $t_*\in(0,t_{j_0})$ such that $TC[\gamma(\cdot,t_*)]=0$, and hence $\gamma(\cdot,t_*)\in A_\ell'$.
  By \eqref{eq:05} we have $B[\gamma(\cdot,t_*)]\geq 3\varpi^*$, and hence $B[\gamma_0]\geq 3\varpi^*$ by monotonicity, cf.\ Remark \ref{rem:monotonicity}.
\end{proof}

We are now in a position to prove Proposition \ref{prop:migration}.

\begin{proof}[Proof of Proposition \ref{prop:migration}]
  Let $c:=\min\{c_1,c_2,c_3\}$, cf.\ Lemma \ref{lem:uniquecriticalpoint}, Lemma \ref{lem:upperbound}, and Lemma \ref{lem:signpreservation}.
  By monotonicity in Remark \ref{rem:monotonicity} any limit critical point $\gamma_\infty$ satisfies $B[\gamma_\infty]\leq B[\gamma_0]<B[\gamma_\mathrm{loop}^{\ell,+}]$,
  and hence by Lemma \ref{lem:uniquecriticalpoint}, up to reparameterization, we have
  \begin{equation}\label{eq:1222-1}
      \gamma_\infty\in\{\gamma_\mathrm{arc}^{\ell,+},\gamma_\mathrm{arc}^{\ell,-}\}.
  \end{equation}
  In addition, by Lemma \ref{lem:upperbound} the initial curve satisfies $B[\gamma_0]< B[\gamma_\mathrm{loop}^{\ell,+}] \leq 2\varpi^*$, and hence now Lemma \ref{lem:signpreservation} is applicable to deduce that any limit critical point $\gamma_\infty$ needs to satisfy
  \begin{equation}\label{eq:1222-2}
      TC[\gamma_\infty]\geq0.
  \end{equation}
  By \eqref{eq:cri4}, \eqref{eq:1222-1}, and \eqref{eq:1222-2}, we deduce that the limit curve is uniquely determined up to reparamaterization by $\gamma_\infty=\gamma_\mathrm{arc}^{\ell,-}$.
  Thus the desired full convergence follows, cf.~Remark \ref{rem:fullconvergence}.
  The last lower half-plane property $\gamma(I\times[t_1,\infty))\subset H_-^\circ$ also follows by the obtained convergence and Lemma \ref{lem:perturb} (ii).
\end{proof}

\subsection{Choice of well-prepared initial data}

The remaining task is to prove that we can choose an initial curve that satisfies the assumptions in Proposition \ref{prop:migration} and also is contained in the upper half-plane.

\begin{lemma}[Well-prepared initial data]\label{lem:initialcurve}
  Let $\ell\in(0,1)$.
  There exists
  \begin{align}
      & \text{$\gamma\in A_\ell \cap C^\infty(\bar{I};\mathbf{R}^2)$}, \label{eq:ini_assertion_0}
  \end{align}
  such that the following properties hold:
  \begin{align}
      & \text{$k[\gamma](0)=k[\gamma](1)=0$,} \label{eq:ini_assertion_1}\\
      & \text{$\gamma(I)\subset H_+^\circ$, $\theta[\gamma](0)\in(0,\tfrac{\pi}{2})$, $\theta[\gamma](1)\in(\tfrac{3\pi}{2},2\pi)$,} \label{eq:ini_assertion_2}\\
      & \text{$B[\gamma]<B[\gamma_\mathrm{loop}^{\ell,+}]$,} \label{eq:ini_assertion_3}\\
      & \text{$TC[\gamma]>0$.} \label{eq:ini_assertion_4}
  \end{align}
\end{lemma}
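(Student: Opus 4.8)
The plan is to obtain $\gamma$ as a small, carefully chosen perturbation of the upper loop $\gamma_\mathrm{loop}^{\ell,+}$, which already realizes almost all of the required properties. Indeed, as a solution of \eqref{eq:criticalpoint} the loop is a smooth member of $A_\ell$ with $k[\gamma_\mathrm{loop}^{\ell,+}](0)=k[\gamma_\mathrm{loop}^{\ell,+}](1)=0$; it lies in $H_+^\circ$ with the correct endpoint angles by \eqref{eq:cri2} and \eqref{eq:cri3}; and it has positive total curvature by \eqref{eq:cri4}. Thus it satisfies \eqref{eq:ini_assertion_0}, \eqref{eq:ini_assertion_1}, \eqref{eq:ini_assertion_2}, and \eqref{eq:ini_assertion_4}, the only defect being that \eqref{eq:ini_assertion_3} holds with equality. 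It therefore suffices to produce a smooth curve $\gamma\in A_\ell$ that satisfies the natural boundary condition $k[\gamma](0)=k[\gamma](1)=0$, obeys the strict bound $B[\gamma]<B[\gamma_\mathrm{loop}^{\ell,+}]$, and is $C^1$-close to $\gamma_\mathrm{loop}^{\ell,+}$: the open conditions \eqref{eq:ini_assertion_2} and \eqref{eq:ini_assertion_4} are then inherited from the closeness through Lemma \ref{lem:perturb}(i),(iii).

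To lower the energy I would exploit that the loop is not a local minimizer. By \eqref{eq:cri1} the arcs have strictly smaller energy, and the classification results \cite{Ydcds,MYarXiv2209,MYarXiv2301} (cf.\ the introduction) assert that every critical point other than $\gamma_\mathrm{arc}^{\ell,\pm}$ is unstable; that is, the second variation of $B$ constrained to $A_\ell$---equivalently of $B-\lambda L$, with $\lambda$ the multiplier appearing in \eqref{eq:criticalpoint}---is indefinite at $\gamma_\mathrm{loop}^{\ell,+}$. Hence there is a normal field $\phi N$ with $\phi(0)=\phi(1)=0$ and $\int_{\gamma_\mathrm{loop}^{\ell,+}}k\,\phi\,ds=0$ (length-preserving to first order) along which this constrained second variation $Q(\phi)$ is strictly negative.

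Given such a direction I would move along $\phi N$ and correct to second order. The implicit function theorem should furnish a smooth family $\{\gamma_\epsilon\}\subset A_\ell$ with $\gamma_0=\gamma_\mathrm{loop}^{\ell,+}$ and velocity $\phi N$ maintaining, for all small $\epsilon$, the fixed endpoints, the exact length $L=1$, and the natural boundary condition $k[\gamma_\epsilon](0)=k[\gamma_\epsilon](1)=0$, with corrections of order $\epsilon^2$. Since the loop is critical and $\phi$ is length-preserving to first order, a Taylor expansion gives $B[\gamma_\epsilon]=B[\gamma_\mathrm{loop}^{\ell,+}]+\tfrac{\epsilon^2}{2}Q(\phi)+o(\epsilon^2)<B[\gamma_\mathrm{loop}^{\ell,+}]$ for small $\epsilon>0$. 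Taking $\epsilon$ small enough that $\|\gamma_\epsilon-\gamma_\mathrm{loop}^{\ell,+}\|_{C^1}\le\varepsilon$, with $\varepsilon$ from Lemma \ref{lem:perturb}, and setting $\gamma:=\gamma_\epsilon$ then delivers \eqref{eq:ini_assertion_0}--\eqref{eq:ini_assertion_4}.

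The main obstacle is the compatibility of the descent direction with the natural boundary condition. One cannot take $\phi$ clamped ($\phi'(0)=\phi'(1)=0$) or compactly supported: forcing $\phi'$ to vanish would concentrate the $H^2$-norm near the endpoints and could ruin the negativity of $Q$, and clamped instability is a strictly stronger---possibly false---statement. What is genuinely required is only that the perturbed curvature keep vanishing at the endpoints, which to leading order reads $\phi''(0)=\phi''(1)=0$ while $\phi'(0),\phi'(1)$ stay free. I would therefore adjust the abstract unstable direction on two short intervals $[0,\delta]$ and $[1-\delta,1]$, replacing $\phi$ there by a cubic that matches $\phi$ to first order at the inner ends and satisfies $\phi''(0)=\phi''(1)=0$; a direct estimate shows the resulting change in $Q$ is $O(\delta)$, so negativity persists for small $\delta$, after which an interior correction restores $\int k\,\phi\,ds=0$. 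A reassuring structural point is that, because $k=0$ is the \emph{natural} boundary condition for $B$, the Lagrange multipliers associated with it vanish at the loop, so the pertinent constrained second variation is precisely $Q$ restricted to these boundary-compatible directions and the implicit function step carries no extra multiplier terms.
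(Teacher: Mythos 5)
Your route is genuinely different from the paper's, although its skeleton---perturb the upper loop $\gamma_*:=\gamma_\mathrm{loop}^{\ell,+}$ inside $A_\ell$ so as to strictly decrease $B$ while keeping the endpoints, the unit length and the vanishing boundary curvature, then inherit \eqref{eq:ini_assertion_2} and \eqref{eq:ini_assertion_4} from $C^1$-closeness via Lemma \ref{lem:perturb}---matches the paper's. The genuine gap is in how you produce the energy-decreasing perturbation. You pass from ``all non-minimal critical points are unstable'' to ``the constrained second variation $Q$ is indefinite at $\gamma_*$'' with a ``that is'', but these are not equivalent: instability (failure to be a local minimizer) is compatible with $Q\geq 0$ possessing only a degenerate direction along which the energy decreases at higher than second order, in which case your expansion $B[\gamma_\epsilon]=B[\gamma_*]+\tfrac{\epsilon^2}{2}Q(\phi)+o(\epsilon^2)$ yields no sign information. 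Moreover, the instability invoked from \cite{Ydcds,MYarXiv2209,MYarXiv2301} is established there by explicit comparison curves rather than by a spectral analysis of the second variation, so the indefiniteness you need is not available off the shelf and is not proved in your sketch. Your subsequent repairs---the cubic flattening of $\phi$ near the endpoints with an $O(\delta)$ loss in $Q$, and the implicit-function correction restoring exact length and exact $k=0$ at the endpoints---are plausible and standard, but they all hinge on first having a direction with $Q(\phi)<0$.

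For comparison, the paper avoids second variations entirely. It imports from \cite{MYarXiv2301} the explicit perturbation of the loop obtained by sliding the parameter window of the odd, analytic extension of $\gamma_*$, which strictly decreases \emph{both} $B$ and $L$ while keeping the chord length $\ell$; it then flattens the curve near the endpoints and near the self-intersection by a cut-off (securing smoothness and $k=0$ at the endpoints), and finally dilates only the loop part by a factor $\sigma>1$ to bring the total length back to exactly $1$. Since this dilation increases length while decreasing the bending energy of the loop part, the total energy remains below $B[\gamma_*]$. That explicit rescaling device is precisely what replaces your indefiniteness-plus-IFT step; the cleanest way to salvage your argument is to substitute this known comparison curve for the abstract unstable direction, at which point your remaining reductions go through.
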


\begin{proof}
  Write $\gamma_*:=\gamma_\mathrm{loop}^{\ell,+}$ for simplicity.
  Let us first recall a known perturbation of $\gamma_*$ within $A_\ell$ that decreases both energy $B$ and length $L$, cf.\ \cite{MYarXiv2301}.
  Since the curvature is of the form $k[\gamma_*](x)=\Phi(x)$ for $x\in\bar{I}$ with some odd, antiperiodic, analytic function $\Phi:\R\to\R$ such that $\Phi(0)=0$, $\Phi(x+1)=-\Phi(x)$, and $\Phi(x)=\Phi(1-x)$, cf.\ \cite{Ydcds}, the odd extension of $\gamma_*$ defined by $\gamma_*(x)=-\gamma_*(-x)$ for $x\in[-1,0]$ is again analytic on $[-1,1]$.
  Since in addition $\theta[\gamma_*](0)\in(0,\tfrac{\pi}{2})$ and $\theta[\gamma_*](1)\in(\tfrac{3}{2}\pi,2\pi)$, for any large integer $j\geq1$ we have $|\gamma_*(-\tfrac{1}{j})-\gamma_*(1-\tfrac{1}{j})|>\ell$ and $|\gamma_*(0)-\gamma_*(1-\tfrac{1}{j})|<\ell$ and hence there is $c_j\in(0,\frac{1}{j})$ such that 
  $|\gamma_*(-c_j)-\gamma_*(1-\tfrac{1}{j})|=\ell$ (see also the proof of \cite[Theorem 2.8]{MYarXiv2301}).
  Therefore, there is a suitable orientation-preserving isometric transformation $Q:\R^2\to\R^2$ such that 
  $$Q\gamma_*|_{[-c_j,1-\frac{1}{j}]}\in A_{\ell,1-\frac{1}{j}+c_j}\cap C^\infty(\bar{I};\mathbf{R}^2),$$ 
  $L\big[Q\gamma_*|_{[-c_j,1-\frac{1}{j}]}\big]=1-\frac{1}{j}+c_j<1=L[\gamma_*]$, and $B\big[Q\gamma_*|_{[-c_j,1-\frac{1}{j}]}\big]<B[\gamma_*]$ by our construction and the above properties of $\Phi$.
  
  In summary, we can construct a constant-speed smooth curve
  \begin{align}
      & \text{$\gamma\in A_{\ell,L[\gamma]}\cap C^\infty(\bar{I};\mathbf{R}^2)$ with $|\gamma'|\equiv L[\gamma]$}, \label{eq:ini0}
  \end{align}
  such that
  \begin{align}
      & \text{$\|\gamma-\gamma_*\|_{C^1}< \varepsilon$ with $\varepsilon>0$ as in Lemma \ref{lem:perturb},} \label{eq:ini1}\\
      & \text{$L[\gamma]<1$ and $B[\gamma]<B[\gamma_*]$.} \label{eq:ini2}
  \end{align}
  In particular, Lemma \ref{lem:perturb} (i) implies that
  \begin{align}
     & \text{$\gamma(I)\subset H_+^\circ$, $\theta[\gamma](0)\in(0,\tfrac{\pi}{2})$, $\theta[\gamma](1)\in(\tfrac{3\pi}{2},2\pi)$,} \label{eq:ini31}
  \end{align}
  Lemma \ref{lem:perturb} (iii) implies that
  \begin{align}\label{eq:ini32}
      & \text{$TC[\gamma]>0$,}
  \end{align}
  and Lemma \ref{lem:perturb} (iv) implies that there are $a,b$ with $0<a<b<1$ such that
  \begin{align}
      &\text{$\gamma(a)=\gamma(b)$, $\gamma((a,b))-\gamma(a)\subset H_+^\circ$, $\theta[\gamma](a)\in(0,\tfrac{\pi}{2})$, $\theta[\gamma](b)\in(\tfrac{3\pi}{2},2\pi)$.} \label{eq:ini3}
  \end{align}
  
  Moreover, after a cut-off modification around the endpoints and also the self-intersection points, and constant-speed reparameterization, we may additionally assume that there is a small $\delta\in(0,\frac{1}{2}\min\{a,b-a,1-b\})$ such that
  \begin{align}
      & \text{$\gamma''\equiv0$ on $[0,\delta]\cup[1-\delta,1]$} \label{eq:ini4}\\
      & \text{$\gamma''\equiv0$ on $[a-\delta,a+\delta]\cup[b-\delta,b+\delta]$.} \label{eq:ini4'}
  \end{align}
  Indeed, let $\gamma_\delta:=(1-\eta_\delta)\gamma+\eta_\delta\xi$, where $x_0\in[0,1]$ and $\xi(x):=\gamma(x_0)+(x-x_0)\gamma'(x_0)$ and $\eta_\delta(x):=\eta(x/\delta)$ with any fixed nonincreasing cut-off function $\eta\in C^\infty(\mathbf{R})$ such that $\eta\equiv1$ on $[-1,1]$ and $\eta\equiv0$ on $\R\setminus(-2,2)$.
  Then $\gamma_\delta$ satisfies \eqref{eq:ini0} for any $\delta>0$.
  In addition, since $\gamma_\delta\to\gamma$ in $H^2(I)\cap C^1(\bar{I})$, all the properties in \eqref{eq:ini0}--\eqref{eq:ini3} are remained true even if $\gamma$ is replaced by the constant-speed reparameterization of $\gamma_\delta$ for any small $\delta>0$ (up to suitably redefining $a,b$).
  Thus we may assume \eqref{eq:ini4} and \eqref{eq:ini4'} by performing the above modification at the four points $x_0=0,a,b,1$ with sufficiently small $\delta$.

  Now we rescale the `loop-part' $\gamma|_{[a,b]}$ of the above $\gamma$ with a scaling factor $\sigma\geq1$, i.e., replace the part $\gamma|_{[a,b]}$ with $\sigma(\gamma|_{[a,b]}-\gamma(a))+\gamma(a)$ and reparameterize the entire curve to be of constant speed, to produce a family of constant-speed smooth curves
  \begin{align}
      & \text{$\gamma_\sigma \in A_{\ell,L[\gamma_\sigma]}\cap C^\infty(\bar{I};\mathbf{R}^2)$ with $|\gamma_\sigma'|\equiv L[\gamma_\sigma]$,} \label{eq:ini5}
  \end{align}
  where in particular the smoothness is ensured by the fact that $\gamma$ is completely flat near the self-intersection points $a$ and $b$, cf.\ \eqref{eq:ini4'},
  such that the family $\{\gamma_\sigma\}_{\sigma\geq1}$ has the properties that
  \begin{align}
      & \text{each $\gamma_\sigma$ satisfies all
      \eqref{eq:ini31}--\eqref{eq:ini4'} with $a,b,\delta$ dependent on $\sigma\geq1$,} \label{eq:ini10}\\
      & \text{$\sigma\mapsto L[\gamma_\sigma]$ is continuous, $L[\gamma_1]<1$, and $\lim_{\sigma\to\infty}L[\gamma_\sigma]=\infty$,} \label{eq:ini8}\\
      & \text{$B[\gamma_\sigma]< B[\gamma_*]$ for all $\sigma\geq1$.} \label{eq:ini7}
  \end{align}
  %Indeed, by construction the total variation of the angle is preserved as $\sigma$ varies so that \eqref{eq:ini32} is clear.
  %In addition, since $\gamma|_{[a,b]}-\gamma(a)\subset H_+^\circ$, cf.\ \ref{eq:ini3}, we also have $\gamma_\sigma|_{[a_\sigma,b_\sigma]}-\gamma(a_\sigma)\subset H_+^\circ$ so that... 
  Note that by \eqref{eq:ini3} the loop-part is upward and hence the upper half-plane property in \eqref{eq:ini31} holds even for large $\sigma\geq1$.
  %Properties \eqref{eq:ini7} and \eqref{eq:ini8} are just due to the scaling properties applied to the loop-part to deduce that $B[\gamma_\sigma]-B[\gamma_1]=\sigma^{-1}B[\gamma|_{[a,b]}]$ and $L[\gamma_\sigma]-L[\gamma_1]=\sigma L[\gamma|_{[a,b]}]$.
  Property \eqref{eq:ini8} follows by the scaling property $L[\gamma_\sigma]-L[\gamma_1]=(\sigma-1) L[\gamma|_{[a,b]}]$ and by $L[\gamma_1]=L[\gamma]<1$, cf.\ \eqref{eq:ini2}.
  Property \eqref{eq:ini7} similarly follows by $B[\gamma_\sigma]-B[\gamma_1]=(\sigma^{-1}-1)B[\gamma|_{[a,b]}]$ and by $B[\gamma_1]=B[\gamma]<B[\gamma_*]$, cf. \eqref{eq:ini2}.
  
  Now, by \eqref{eq:ini8} we can pick a suitable $\sigma_0>1$ such that
  \begin{align}\label{eq:ini9}
      & L[\gamma_{\sigma_0}] =1,
  \end{align}
  and the curve $\gamma_{\sigma_0}$ satisfies all the desired properties.
  Indeed, \eqref{eq:ini_assertion_0} follows by \eqref{eq:ini5} with \eqref{eq:ini9}.
  All the other properties can be obtained through \eqref{eq:ini10}; more precisely, \eqref{eq:ini_assertion_1} follows by\eqref{eq:ini4'}, \eqref{eq:ini_assertion_2} follows by \eqref{eq:ini31}, \eqref{eq:ini_assertion_3} follows by \eqref{eq:ini7}, and \eqref{eq:ini_assertion_4} follows by \eqref{eq:ini32}.
\end{proof}

Now we complete the proof of our main theorem.

\begin{proof}[Proof of Theorem \ref{thm:main_migrating}]
  Choose an initial curve $\gamma_0$ as in Lemma \ref{lem:initialcurve} to the flow \eqref{eq:flow} under the boundary condition in \eqref{eq:BC}.
  Note that by \eqref{eq:ini_assertion_0} and \eqref{eq:ini_assertion_1} this curve is indeed compatible to \eqref{eq:BC}.
  Then by Theorem \ref{thm:globalexistence} the flow has a unique smooth solution.
  By \eqref{eq:ini_assertion_2} and smoothness of $\gamma$, there exists a small $t_0>0$ such that $\gamma(I\times[0,t_0])\subset H_+^\circ$.
  On the other hand, thanks to \eqref{eq:ini_assertion_3} and \eqref{eq:ini_assertion_4}, Proposition \ref{prop:migration} implies that there exists a large $t_1>0$ such that $\gamma(I\times[t_1,\infty))\subset H_-^\circ$.
\end{proof}

\section{Numerical examples of migrating elastic flows}\label{sect:numerical}

In this section, we numerically investigate both the length-preserving and the length-penalized elastic flows.
% For the length-preserving flow, we report numerical examples that support Theorem~\ref{thm:main_migrating} and also suggest that Theorem~\ref{thm:main_migrating} holds for $c = 1$, both of which involve a single sliding loop.
% Although those flows are asymmetric, we additionally report a symmetric migrating flow involving two sliding loops.
% For the length-penalized flow, we report numerical examples from the same initial curves as in the length-preserved case, and find that some of them do migrate but the others do not, depending on the penalization parameter ($\lambda \ell^2$ in the introduction).
Throughout this section, we fix $\ell=1$.% and consider curves with various length $L > 1$.

The following examples are computed by almost the same scheme proposed in \cite{KMSarXiv2208}, which ensures that numerical solutions satisfy $\frac{dB}{dt} \le 0$.
The constraint $\frac{dL}{dt} = 0$ is also satisfied for the length-preserving case.
Although the method of \cite{KMSarXiv2208} is developed for the gradient flows of planar closed curves, it is not difficult to apply the machinery to the case of our boundary condition \eqref{eq:BC}.

\subsection{Length-preserving case}

We first observe the dynamics of the elastic flow \eqref{eq:elasticflow} with $\lambda$ given by \eqref{eq:lambda_nonlocal} so that the length is preserved.
In the first two examples (Examples~\ref{ex:constrained_asymmetric_long}, \ref{ex:constrained_asymmetric_short}), we consider the flow with asymmetric initial curves, and a symmetric flow is addressed in the last example (Example~\ref{ex:constrained_symmetric}).

\begin{example}[Asymmetric flow with long length]
\label{ex:constrained_asymmetric_long}
    We gave the initial curve by
    \begin{equation}
        \gamma_0(x) =
        \begin{pmatrix}
            a \sin(\pi x) + b \sin(2\pi x) + c\sin(3\pi x) + x \\
            d \sin(\pi x)
        \end{pmatrix},
        \qquad x \in [0,1]
        \label{eq:initial-asymmetric}
    \end{equation}
    with $b=1/\pi, c=0.4/\pi, d=1$, and  $a = 2b - 3c + 1/\pi$ so that the boundary condition is satisfied.
    The length is $L[\gamma_0] \approx 2.729$ and the bending energy is $B[\gamma_0] \approx 37.72$.

    The numerical result is illustrated in Figure~\ref{fig:constrained_asymmetric_long}.
    One observes that the curve starts migration after $t=0.1$ with loop-sliding behavior.
    Since in the proof of Theorem \ref{thm:main_migrating} we have also constructed an initial curve of an asymmetric loop, we expect that our theoretical solution also behaves like Figure~\ref{fig:constrained_asymmetric_long}.
    
    \begin{figure}
        \centering
        \includegraphics[page=1,width=\linewidth]{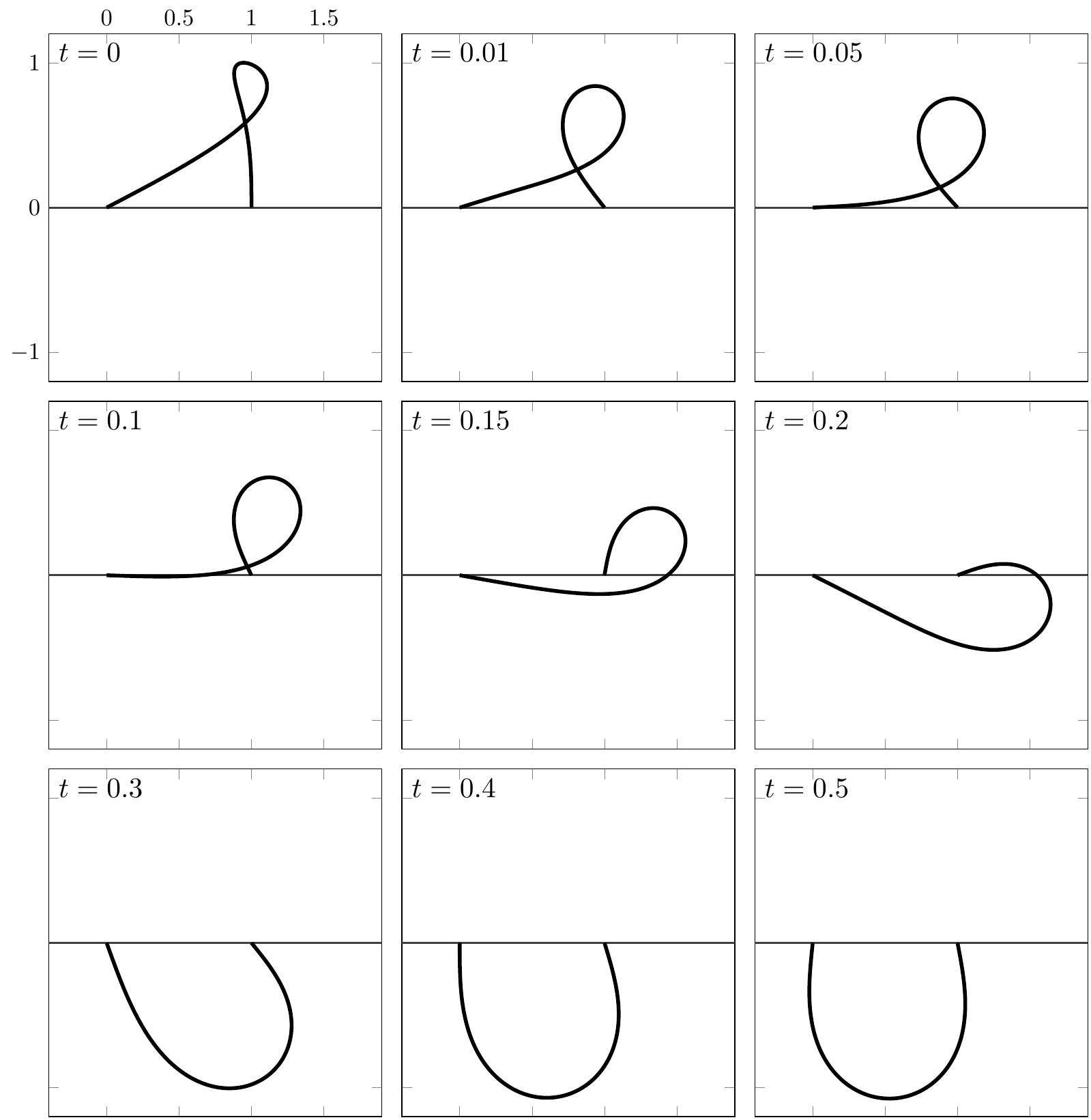}
        \caption{Numerical result of Example~\ref{ex:constrained_asymmetric_long}}
        \label{fig:constrained_asymmetric_long}
    \end{figure}
\end{example}

\begin{example}[Asymmetric flow with short length]
\label{ex:constrained_asymmetric_short}
    We gave the initial curve by \eqref{eq:initial-asymmetric} with $b=0.6/\pi, c=0.2/\pi, d=0.1$, and  $a = 2b - 3c + 1/\pi$ so that the boundary condition is satisfied.
    The length is $L[\gamma_0] \approx 1.117$, which is close to $\ell=1$, and the bending energy is $B[\gamma_0] \approx 1503$.

    The numerical result is illustrated in Figure~\ref{fig:constrained_asymmetric_short}.
    Even though this case may not be covered by Theorem~\ref{thm:main_migrating}, the migrating phenomenon is observed.
    Indeed, the loop first slides to the right and then comes untied by turning around the right endpoint after $t \approx 9.0 \cdot 10^{-5}$.
    The right part of the curve thus migrates to $H_-$, and simultaneously the left tail is waved in reaction to the untying procedure and once protrudes from $H_-$, but eventually the whole curve migrates to $H_-$.
    This result suggests that Theorem~\ref{thm:main_migrating} itself would hold even for $c \approx 1$, but the dynamical mechanism would be much more involved.
    %The curve becomes wave-like shape around $t \approx 0.001$, and finally migrates to $H_-$.
    %This result suggests that Theorem~\ref{thm:main_migrating} holds even for $c \approx 1$.
    
    \begin{figure}
        \centering
        \includegraphics[page=2,width=\linewidth]{numerical_results.pdf}
        \caption{Numerical result of Example~\ref{ex:constrained_asymmetric_short}}
        \label{fig:constrained_asymmetric_short}
    \end{figure}
\end{example}

\begin{example}[Symmetric flow]
\label{ex:constrained_symmetric}

    We gave the initial curve by 
    \begin{equation}
        \gamma_0(x) =
        \begin{pmatrix}
            x - \frac{\sin(2\pi x)}{2\pi} + a( -2\sin(2\pi x) + \sin(4\pi x) ) \\
            b \sin(\pi x) + c \sin(3\pi x) + d \sin(5\pi x)
        \end{pmatrix},
        \qquad x \in [0,1]
        \label{eq:initial-symmetric}
    \end{equation}
    with $a = 0.15, b = 0.3, c = 0.2, d = 0.05$.
    This curve is symmetric and satisfies the boundary condition \eqref{eq:BC}.
    The length is $L[\gamma_0] \approx 2.725$ and the bending energy is $B[\gamma_0] \approx 95.58$.
    
    The numerical result is illustrated in Figure~\ref{fig:constrained_symmetric}.
    The `horizontal' part goes down to $H_-$ in the beginning.
    After $t = 0.004$, the loops come untied and the curve migrates to $H_-$.
    Although our proof of Theorem~\ref{thm:main_migrating} relies on constructing an asymmetric initial curve, the numerical result here suggests that the same assertion would hold true even under the additional assumption that the initial curve is reflectionally symmetric. 
    It would also be interesting to consider how this kind of loop-sliding behavior can be used for constructing other types of nontrivial motions.
    %Although the proof of Theorem~\ref{thm:main_migrating} relies on constructing an asymmetric initial curve, it is suggested from the numerical result that the theorem holds under the additional assumption that the initial curve is symmetric.

    \begin{figure}
        \centering
        \includegraphics[page=3,width=\linewidth]{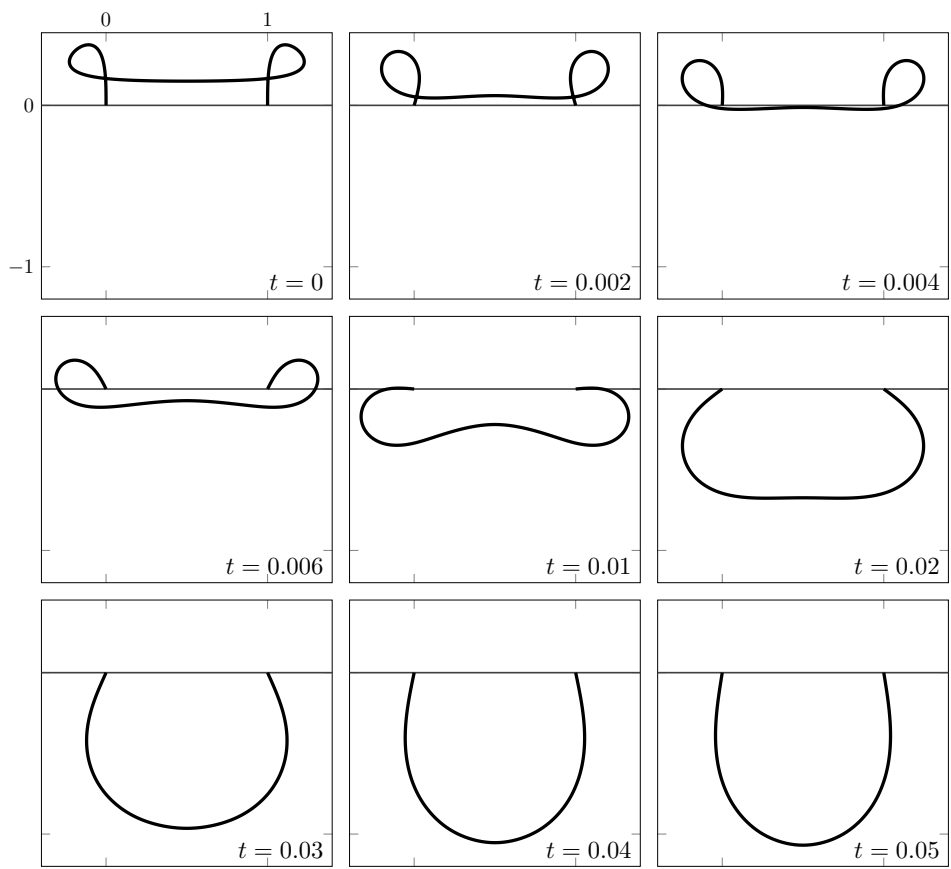}
        \caption{Numerical result of Example~\ref{ex:constrained_symmetric}}
        \label{fig:constrained_symmetric}
    \end{figure}

\end{example}

\subsection{Length-penalized case}

We next observe the elastic flow \eqref{eq:elasticflow} with a \emph{constant} $\lambda$ with the same initial curves as in the previous examples.
In the first two examples (Examples~\ref{ex:unconstrained_asymmetric_long}, \ref{ex:unconstrained_asymmetric_short}), we consider the flow with Asymmetric initial curves and observe whether there are any differences from the length-preserving case.
In the last example (Example~\ref{ex:unconstrained_symmetric}), we observe that, even for the same initial curve, the dynamics is strongly dominated by the penalization parameter $\lambda$.

\begin{example}[Asymmetric flow with small $\lambda$]
\label{ex:unconstrained_asymmetric_long}

    We gave the same initial curve as in Example~\ref{ex:constrained_asymmetric_long} and set $\lambda = 9$.
    The numerical result is illustrated in Figure~\ref{fig:unconstrained_asymmetric_long} and the migrating behavior is observed.
    The dynamics is similar to that of Example~\ref{ex:constrained_asymmetric_long} before migrating, and after $t \approx 0.24$, the curve shrinks rapidly to the line segment, which is the trivial critical point of $B + \lambda L$.
    This example suggests that the migrating phenomenon occurs even for the length-penalized flow.

    \begin{figure}
        \centering
        \includegraphics[page=4,width=\linewidth]{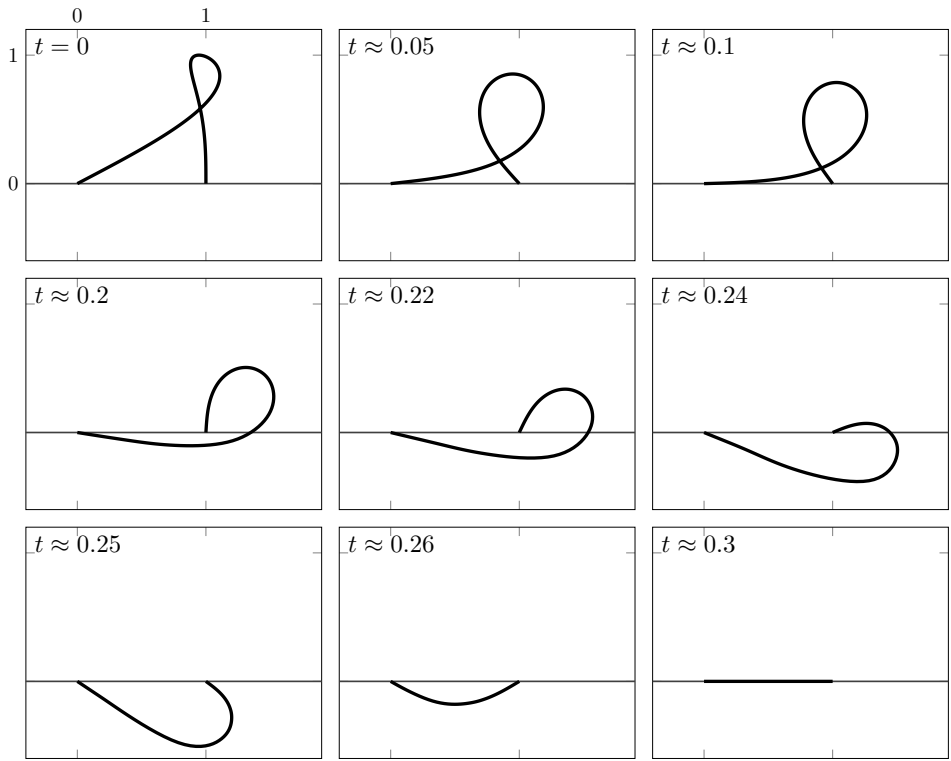}
        \caption{Numerical result of Example~\ref{ex:unconstrained_asymmetric_long}}
        \label{fig:unconstrained_asymmetric_long}
    \end{figure}
    
\end{example}

\begin{example}[Asymmetric flow with large $\lambda$]
\label{ex:unconstrained_asymmetric_short}

    We gave the same initial curve as in Example~\ref{ex:constrained_asymmetric_short} and set $\lambda = 900$.
    In this case, the loop is smaller than the previous case as $\lambda$ is large, and it slides to the right as in Example~\ref{ex:constrained_asymmetric_short}.
    After the loop-sliding behavior, one would expect that the curve migrates to $H_-$ as in Example~\ref{ex:constrained_asymmetric_short}.
    
    However, somewhat interestingly, this is not the case in our numerical computation.
    Indeed,     Figure~\ref{fig:unconstrained_asymmetric_short-rescaled} shows the vertically rescaled snapshot of the solution, suggesting that the curve converges while intersecting with the horizontal axis.
    Of course this might be just an effect of numerical errors, but this result suggests at least that it is substantially delicate whether the migration occurs in the length-penalized case compared to the length-preserved case. 
    
    \begin{figure}
        \centering
        \includegraphics[page=5,width=\linewidth]{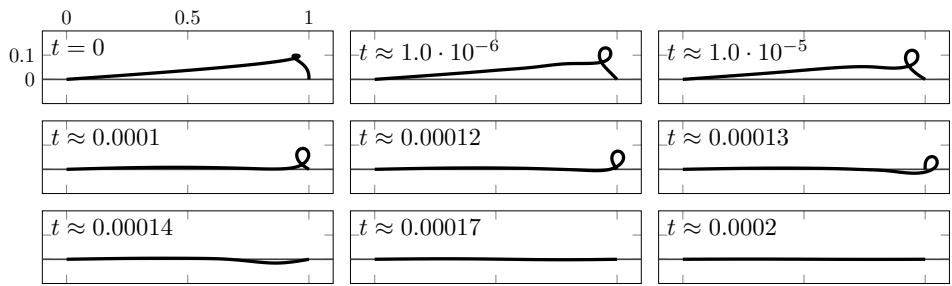}
        \caption{Numerical result of Example~\ref{ex:unconstrained_asymmetric_short}}
        \label{fig:unconstrained_asymmetric_short}
    \end{figure}

    \begin{figure}
        \centering
        \includegraphics[page=6,width=\linewidth]{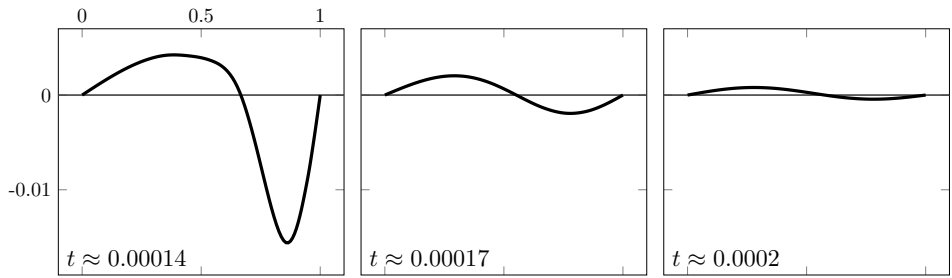}
        \caption{Example~\ref{ex:unconstrained_asymmetric_short} (vertically rescaled)}
        \label{fig:unconstrained_asymmetric_short-rescaled}
    \end{figure}
\end{example}

\begin{example}[Symmetric flow with different $\lambda$]
\label{ex:unconstrained_symmetric}

    In the last example, we gave the initial curve as in Example \ref{ex:constrained_symmetric}.
    We computed numerical solutions for $\lambda = 16$ and $\lambda = 900$ and the results are illustrated in Figures~\ref{fig:unconstrained_symmetric-1} and \ref{fig:unconstrained_symmetric-2}, respectively.
    
    One observes very different behaviors for the different parameters.
    For the smaller $\lambda$, the flow behaves like the length-preserved case until the loops come untied, and also eventually migrates to $H_-$, but the curve converges not to an arc but to a segment while shrinking the length.
    On the other hand, for the bigger $\lambda$, the shrinking behavior is more dominant and this prevents the solution from migrating.
    Indeed, the solution does not migrate to $H_-$ as indicated in Figure~\ref{fig:unconstrained_symmetric-2-rescaled}, which illustrates the vertically rescaled snapshot. 
    The behavior observed here would be quite compatible with Example \ref{ex:unconstrained_asymmetric_short} (up to a symmetric extension).
    % These two examples suggests that, for the length-penalized case, the magnitude of the penalization parameter $\lambda$ strongly affects the behavior of the solution.

    \begin{figure}
        \centering
        \includegraphics[page=7,width=\linewidth]{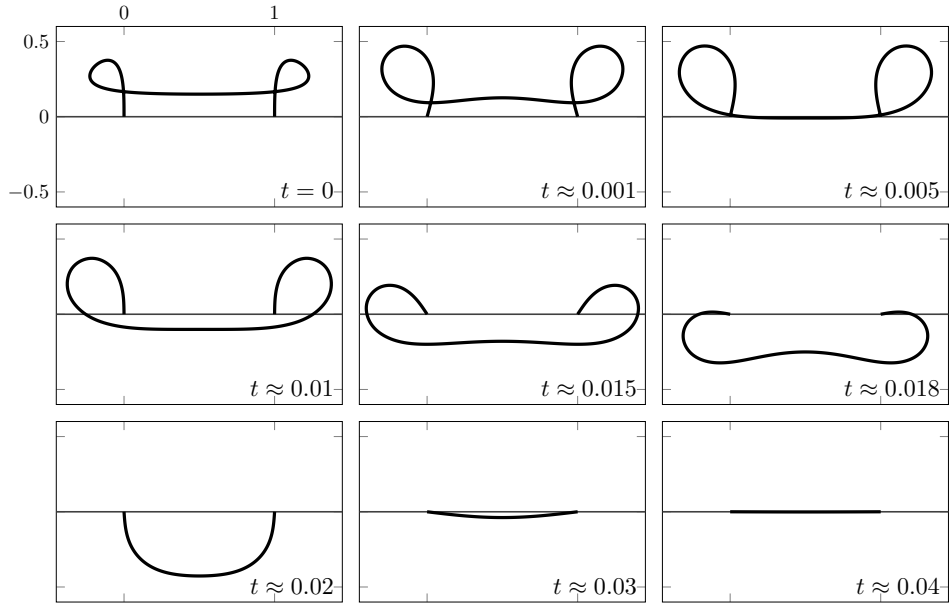}
        \caption{Numerical result of Example~\ref{ex:unconstrained_symmetric} ($\lambda = 16$)}
        \label{fig:unconstrained_symmetric-1}
    \end{figure}

    \begin{figure}
        \centering
        \includegraphics[page=8,width=\linewidth]{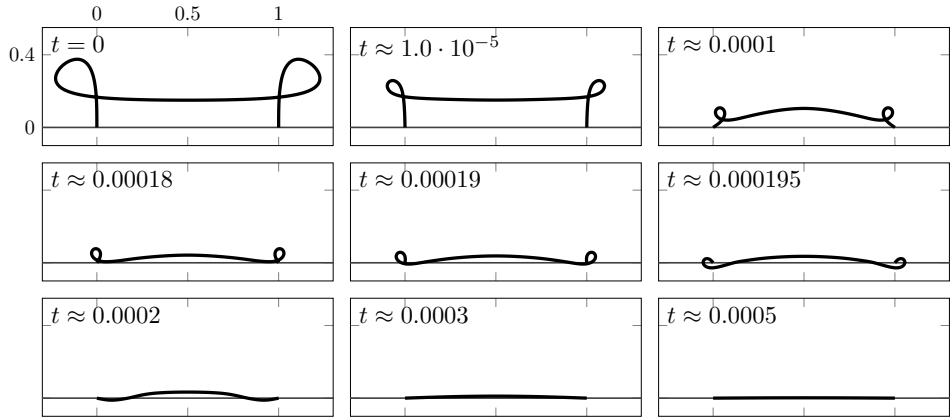}
        \caption{Numerical result of Example~\ref{ex:unconstrained_symmetric} ($\lambda = 900$)}
        \label{fig:unconstrained_symmetric-2}
    \end{figure}
    
    \begin{figure}
        \centering
        \includegraphics[page=9,width=\linewidth]{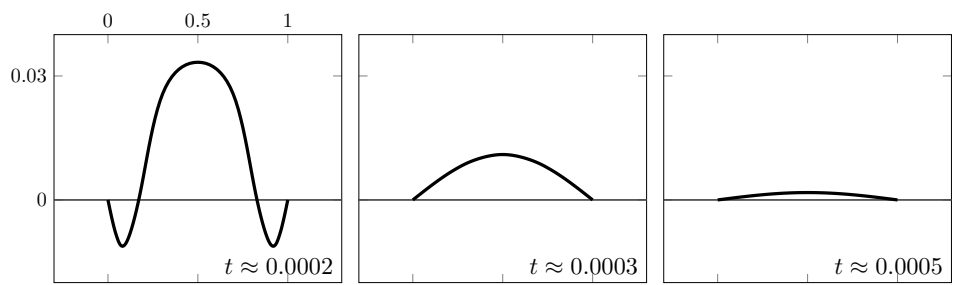}
        \caption{Example~\ref{ex:unconstrained_symmetric} with $\lambda = 900$ (vertically rescaled)}
        \label{fig:unconstrained_symmetric-2-rescaled}
    \end{figure}
\end{example}

The above examples suggest that the penalization parameter $\lambda$ strongly affects the possibility of migration, and more precisely it is hard to migrate if $\lambda$ is large. This would be compatible with the fact that the singular limit $\lambda=\infty$ formally corresponds to the curve shortening flow, for which no migration occurs.

\bibliography{ref_KM}

\end{document}